\newcommand\supp{\mathop{\rm supp}}
\theoremstyle{plain} 
\newtheorem{theorem}{\indent\sc Theorem}[section]
\newtheorem{lemma}[theorem]{\indent\sc Lemma}
\newtheorem{corollary}[theorem]{\indent\sc Corollary}
\newtheorem{proposition}[theorem]{\indent\sc Proposition}
\theoremstyle{definition} 
\newtheorem{definition}[theorem]{\indent\sc Definition}
\newtheorem{remark}[theorem]{\indent\sc Remark}
\def\address#1#2{\begingroup
\noindent\parbox[t]{7.8cm}{%
\small{\scshape\ignorespaces#1}\par\vskip1ex
\noindent\small{\itshape E-mail address}%
\/: #2\par\vskip4ex}\hfill%
\endgroup}%
\title{A molecular decomposition for $H^p(\mathbb{Z}^n)$} 
\author{
%
%
\textsc{Pablo Rocha} 
}
\date{} 
\begin{document}

\maketitle

\footnote{ 
2020 \textit{Mathematics Subject Classification}.
42B30, 42B25, 47B06.
}
\footnote{ 
\textit{Key words and phrases}:
discrete Hardy spaces, molecular decomposition, discrete Riesz potential
}

\begin{abstract}
In this work, for the range $\frac{n-1}{n} < p \leq 1$, we give a molecular reconstruction theorem for $H^p(\mathbb{Z}^n)$. As an application of this result and the atomic decomposition developed by S. Boza and M. Carro in \cite{Carro}, we prove that the discrete 
Riesz potential $I_{\alpha}$ defined on $\mathbb{Z}^n$ is a bounded operator $H^p(\mathbb{Z}^n) \to H^q(\mathbb{Z}^n)$ for 
$\frac{n-1}{n} < p < \frac{n}{\alpha}$ and $\frac{1}{q} = \frac{1}{p} - \frac{\alpha}{n}$, where $0 < \alpha < n$. 
\end{abstract}

\section{Introduction}

S. Boza and M. Carro in \cite{Carro} (see also \cite{Boza}) introduced the discrete Hardy spaces on $\mathbb{Z}^n$. They gave a variety of distinct approaches, based on differing definitions, all leading to the same notion of Hardy spaces $H^p(\mathbb{Z}^n)$. We briefly recall these characterizations. Consider the discrete Poisson kernel on $\mathbb{Z}^n$, which is defined by
\[
P_t^d(j) = C_n \frac{t}{(t^2 + |j|^2)^{(n+1)/2}}, \,\,\,\, t > 0, \,\, j \in \mathbb{Z}^n \setminus \{ {\bf 0} \}, \,\, P_t^d({\bf 0}) = 0,
\]
where $C_n$ is a normalized constant depending on the dimension. For a sequence $b= \{ b(j) \}_{j \in \mathbb{Z}^n}$, let
\[
\| b \|_{\ell^{p}(\mathbb{Z})} = \left\{\begin{array}{cc}
                  \left( \displaystyle{\sum_{j \in \mathbb{Z}^n}} |b(j)|^{p} \right)^{1/p}, &  0 < p < \infty \\
                  \displaystyle{\sup_{j \in \mathbb{Z}^n}} \, |b(j)| \, , & \,\,\,\, p = \infty
                \end{array}. \right.
\]
A sequence $b= \{ b(j) \}_{j \in \mathbb{Z}^n}$ is said to belong to $\ell^{p}(\mathbb{Z}^n)$, $0 < p \leq \infty$, if 
$\| b \|_{\ell^{p}(\mathbb{Z}^n)} < \infty$.

Then, for $0 < p \leq 1$, we define
\[
H^p(\mathbb{Z}^n) = \left\{ b \in \ell^p(\mathbb{Z}^n) : \sup_{t>0} |(P_t^d \ast_{\mathbb{Z}^n} b)| \in \ell^p(\mathbb{Z}^n) \right\},
\]
with the "$p$-norm" given by
\[
\| b \|_{H^p(\mathbb{Z}^n)} := \| b \|_{\ell^p(\mathbb{Z}^n)} + \| (P_t^d \ast_{\mathbb{Z}^n} b) \|_{\ell^p(\mathbb{Z}^n)}.
\]
By \cite[Theorem 2.7]{Carro} the discrete Poisson kernel $P_t^d$ can be substituted by $\Phi_t^d$, where 
$\Phi \in \mathcal{S}(\mathbb{R}^n)$, $\int_{\mathbb{R}^n} \Phi =1$, $\Phi_t^d(j) = t^{-n} \Phi(j/t)$ if $j \neq {\bf 0}$ and 
$\Phi_t^d({\bf 0}) = 0$. Moreover, the respective $H^p$-norms are equivalent.

Now, for $s=1, ..., n$, we introduce the discrete Riesz kernels $K_{s}^d$ on $\mathbb{Z}^n$, that is
\[
K_{s}^d(j) = \frac{j_s}{|j|^{n+1}}, \,\,\,\, \text{for} \,\, s=1, ..., n \,\, \text{and} \,\, j = (j_1, ..., j_n) \in 
\mathbb{Z}^n \setminus \{ {\bf 0 } \},
\]
and $K_{s}^d({\bf 0 }) = 0$. The discrete Riesz transforms, $R_{s}^d$, applied to a sequence $b = \{ b(j) \}_{j \in \mathbb{Z}^n}$ are the
convolution operators
\[
(R_{s}^d b)(m) = (K_{s}^d \ast_{\mathbb{Z}^n} b)(m) = \sum_{j \neq m} b(j) \frac{m_s - j_s}{|m-j|^{n+1}}, \,\,\,\, s = 1, ..., n.
\]
Then, for $0 < p < \infty$, one defines
\[
H^p_{\text{Riesz}}(\mathbb{Z}^n) = \left\{ b \in \ell^p(\mathbb{Z}^n) : R_{s}^d b \in \ell^p(\mathbb{Z}^n), \, s=1, ..., n \right\},
\]
with the "$p$-norm" given by
\begin{equation} \label{Riesz norm}
\| b \|_{H^p_{\text{Riesz}}(\mathbb{Z}^n)} := \| b \|_{\ell^p(\mathbb{Z}^n)} + \sum_{s=1}^n \| R_{s}^d b \|_{\ell^p(\mathbb{Z}^n)}.
\end{equation}
For $\frac{n-1}{n} < p \leq 1$, \cite[Theorem 2.6]{Carro} states that $H^p(\mathbb{Z}^n) = H^p_{\text{Riesz}}(\mathbb{Z}^n)$, 
with equivalent $H^p$-norms. For $1 < p < \infty$, we define $H^p(\mathbb{Z}^n) := H^p_{\text{Riesz}}(\mathbb{Z}^n) = \ell^p(\mathbb{Z}^n)$ 
(this last identity follows from \cite[Proposition 12]{Auscher}) and put $H^{\infty}(\mathbb{Z}^n) := \ell^{\infty}(\mathbb{Z}^n)$. In 
\cite{Boza2}, S. Boza and M. Carro established the connection between the boundedness of convolution operators on $H^p(\mathbb{R}^n)$ and some related operators on $H^p(\mathbb{Z}^n)$.

In \cite{Carro}, the authors also gave an atomic characterization of $H^p(\mathbb{Z}^n)$ for $0 < p \leq 1$. Before establishing this result we give the definition of $(p, \infty, L)$-atom in $H^p(\mathbb{Z}^n)$. 
\begin{definition}
Let $0 < p \leq 1 < p_0 \leq \infty$, $d_p := \lfloor n(p^{-1} - 1) \rfloor$ 
($\lfloor s \rfloor$ indicates the integer part of $s \geq 0$) and $L \geq d_p$, with $L \in \mathbb{N}_0 := \mathbb{N} \cup \{ 0 \}$. We say that a sequence $a = \{ a(j) \}_{j \in \mathbb{Z}^n}$ is an $(p, p_0, L)$-atom centered at a discrete cube $Q \subset \mathbb{Z}^n$ if the following three conditions hold:

(a1) $\supp a \subset Q$,

(a2) $\| a \|_{\ell^{p_0}(\mathbb{Z}^n)} \leq (\# Q)^{1/p_0 - 1/p}$, where $\# Q$ represents the cardinality of $Q$,

(a3) $\displaystyle{\sum_{j \in Q}} j^{\beta} a(j) = 0$, for all multi-index $\beta=(\beta_1, ..., \beta_n) \in \mathbb{N}_0^n$ with 
$[\beta]:=\beta_1 + \cdot \cdot \cdot + \beta_n \leq L$, where $j^{\beta} := j_{1}^{\beta_1} \cdot \cdot \cdot j_{n}^{\beta_n}$.
\end{definition}

\begin{remark} \label{infinity atom}
It is easy to check that every $(p, \infty, L)$-atom is an $(p, p_0, L)$-atom for each $1 < p_0 < \infty$.
\end{remark}

The atomic decomposition for members in $H^p(\mathbb{Z}^n)$, $0 < p \leq 1$, developed in \cite{Carro} is as follows:

\begin{theorem} (\cite[Theorem 3.7]{Carro}) \label{atom decomp} Let $0 < p \leq 1$, $L \geq d_p$ and $b \in H^{p}(\mathbb{Z}^n)$. 
Then there exist a sequence of $(p, \infty, L)$-atoms $\{ a_k \}_{k=0}^{+\infty}$, a sequence of scalars 
$\{ \lambda_k \}_{k=0}^{+\infty}$ and a positive constant $C$, which depends only on $p$ and $n$, with 
$\sum_{k=0}^{+\infty} |\lambda_k |^{p} \leq C \| b \|_{H^{p}(\mathbb{Z}^n)}^{p}$ such that $b = \sum_{k=0}^{+\infty} \lambda_k a_k$, 
where the series converges in $H^{p}(\mathbb{Z}^n)$.
\end{theorem}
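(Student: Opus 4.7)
The plan is to follow the classical Calder\'on--Zygmund/Stein strategy for atomic decompositions, adapted to the discrete lattice. As a first step, I would replace the $H^p$-norm by the grand maximal function
\[
\mathcal{M} b(j) := \sup_{\Phi \in \mathcal{F}_N} \sup_{t>0} |(\Phi_t^d \ast_{\mathbb{Z}^n} b)(j)|,
\]
where $\mathcal{F}_N$ is a bounded family of Schwartz functions with $\int \Phi = 1$ and Schwartz seminorms up to some order $N = N(p,L)$ controlled by $1$. The $\Phi_t^d$-characterisation cited from \cite{Carro} gives the equivalence $\| b \|_{H^p(\mathbb{Z}^n)} \approx \| \mathcal{M} b \|_{\ell^p(\mathbb{Z}^n)}$, so it is enough to produce the decomposition whose $\ell^p$-coefficient sum is controlled by $\| \mathcal{M} b \|_{\ell^p(\mathbb{Z}^n)}^p$.

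Next I would run a Calder\'on--Zygmund/Whitney decomposition on the level sets of $\mathcal{M}b$. For each $k \in \mathbb{Z}$ set $\Omega_k := \{ j \in \mathbb{Z}^n : \mathcal{M} b(j) > 2^k \}$; since $\mathcal{M} b \in \ell^p$, each $\Omega_k$ is finite and satisfies $2^{kp} (\# \Omega_k) \leq \| \mathcal{M} b \|_{\ell^p(\mathbb{Z}^n)}^p$. I would decompose $\Omega_k$ into pairwise disjoint discrete cubes $\{ Q_{k,i} \}_i$ of diameter comparable to the distance from $\Omega_k^c$ (a discrete Whitney covering), and fix a partition of unity $\{ \eta_{k,i} \}_i$ with $\supp \eta_{k,i}$ contained in a mild enlargement $Q^*_{k,i}$ of $Q_{k,i}$ and $\sum_i \eta_{k,i} = \chi_{\Omega_k}$.

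The central construction is the good-bad splitting. For each $(k,i)$, let $\pi_{k,i}(b)$ denote the orthogonal projection of $b$ onto polynomials of degree $\leq L$ in $\ell^2(\mathbb{Z}^n ; \eta_{k,i})$, and set $b_{k,i} := (b - \pi_{k,i}(b)) \, \eta_{k,i}$. Then $\supp b_{k,i} \subset Q^*_{k,i}$ and $\sum_j j^{\gamma} b_{k,i}(j) = 0$ for every multi-index with $[\gamma] \leq L$. A pointwise comparison of $b$ with the value of $\mathcal{M} b$ at a point of $\Omega_k^c$ adjacent to $Q_{k,i}$ should then yield $\| b_{k,i} \|_{\ell^\infty} \lesssim 2^k$, so that normalising by $\lambda_{k,i} := C \, 2^k (\# Q^*_{k,i})^{1/p}$ the sequence $a_{k,i} := \lambda_{k,i}^{-1} b_{k,i}$ is a $(p,\infty,L)$-atom. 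A telescoping identity $b = g_k + \sum_i b_{k,i}$, with $g_k \to b$ as $k \to -\infty$ and $g_k \to 0$ as $k \to +\infty$ in $H^p$, then furnishes $b = \sum_{k,i} \lambda_{k,i} a_{k,i}$ together with
\[
\sum_{k,i} | \lambda_{k,i} |^p \lesssim \sum_k 2^{kp} (\# \Omega_k) \lesssim \| \mathcal{M} b \|_{\ell^p(\mathbb{Z}^n)}^p \lesssim \| b \|_{H^p(\mathbb{Z}^n)}^p.
\]

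The main obstacle I anticipate is the uniform bound $\| b_{k,i} \|_{\ell^\infty} \lesssim 2^k$: after subtracting the weighted polynomial projection, the implicit constant must be independent of the side-length of $Q_{k,i}$, which requires a quantitative control of $\pi_{k,i}(b)$ on $Q^*_{k,i}$ in terms of $\mathcal{M} b$ at a nearby point of $\Omega_k^c$. In the discrete setting this demands extra care at the unit scale $\ell(Q_{k,i}) = 1$, where the usual Taylor-type estimates for $\pi_{k,i}(b)$ degenerate and one must instead argue directly that $b$ itself is bounded by $2^k$ up to absolute constants on such singleton cubes. A secondary technical point is upgrading pointwise convergence of the atomic series to convergence in the $H^p$-norm, which would be handled by a tail estimate for $\mathcal{M}(\sum_{|k|>K} \sum_i b_{k,i})$ in $\ell^p$.
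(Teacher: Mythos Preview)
The paper does not contain a proof of this statement: Theorem~\ref{atom decomp} is quoted verbatim from Boza and Carro \cite[Theorem~3.7]{Carro} and is used here only as an input. There is therefore no ``paper's own proof'' to compare your proposal against.

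That said, a brief comment on strategy may still be useful. Your outline is the classical intrinsic Calder\'on--Zygmund construction (grand maximal function, Whitney decomposition of level sets, weighted polynomial projections, telescoping), carried out directly on the lattice. By contrast, the argument in \cite{Carro} proceeds largely by transference: one passes from the sequence $b$ to a function on $\mathbb{R}^n$ (via convolution with a smooth compactly supported $\Phi$), invokes the known atomic decomposition of $H^p(\mathbb{R}^n)$, and then discretises the resulting atoms back to $\mathbb{Z}^n$. The transference route avoids having to redo the delicate pointwise estimates on $\pi_{k,i}(b)$ in the discrete setting; your direct approach is more self-contained but, as you yourself flag, the step $\|b_{k,i}\|_{\ell^\infty}\lesssim 2^k$ is not free. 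In the continuous case this bound comes from a mean-value/Taylor argument that uses the smoothness of the approximate identity at scale $t\sim \ell(Q_{k,i})$; on $\mathbb{Z}^n$ there is no scale below $1$, so for cubes with $\ell(Q_{k,i})\le C$ you would need a separate argument (essentially that a single value $|b(j)|$ is already dominated by $\mathcal{M}b$ at a nearby point of $\Omega_k^c$, which requires some care since $\Phi_t^d(\mathbf{0})=0$). This is a genuine technical point rather than a fatal gap, but it is precisely the place where the transference proof in \cite{Carro} buys simplicity.
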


The purpose of this article is to continue the study about the behavior of discrete Riesz potential on discrete Hardy spaces 
$H^{p}(\mathbb{Z}^n)$, with $n \geq 1$, began by the author in \cite{Rocha2}. Therein, for $0 < \alpha < n$ and a sequence 
$b = \{ b(i) \}_{i \in \mathbb{Z}^n}$, we consider the discrete Riesz potential defined by
\[
(I_{\alpha}b)(j) = \sum_{i \in \mathbb{Z}^n \setminus \{ j \}} \frac{b(i)}{|i-j |^{n - \alpha}}, \,\,\,\,\,\, j \in \mathbb{Z}^n.
\]
By means of Theorem \ref{atom decomp} and the boundedness of discrete fractional maximal operator, we proved in \cite{Rocha2} that 
$I_{\alpha}$ is a bounded operator $H^{p}(\mathbb{Z}^n) \to \ell^{q}(\mathbb{Z}^n)$ for $0 < p < \frac{n}{\alpha}$ and 
$\frac{1}{q} = \frac{1}{p} - \frac{\alpha}{n}$. Y. Kanjin and M. Satake in \cite{Kanjin} studied the discrete Riesz potential $I_{\alpha}$ for the case $n=1$ and proved the $H^p(\mathbb{Z}) \to H^q(\mathbb{Z})$ boundedness of $I_{\alpha}$, for $0 < p < \alpha^{-1}$ and 
$\frac{1}{q} = \frac{1}{p} - \alpha$. To achieve this result, they furnished a molecular decomposition for $H^p(\mathbb{Z})$ analogous to the ones given by M. Taibleson and G. Weiss in \cite{Taible} for the Hardy spaces $H^p(\mathbb{R}^n)$.

In this work, we prove that $I_{\alpha}$ is a bounded operator $H^{p}(\mathbb{Z}^n) \to H^{q}(\mathbb{Z}^n)$ for 
$\frac{n-1}{n} < p < \frac{n}{\alpha}$ and $\frac{1}{q} = \frac{1}{p} - \frac{\alpha}{n}$. For them, as in \cite{Kanjin}, we furnish a molecular decomposition for the elements of $H^{p}(\mathbb{Z}^n)$, in the range $\frac{n-1}{n} < p \leq 1$. 

For more results about discrete fractional type operators one can consult \cite{Hardy}, \cite{Wainger}, \cite{Oberlin} and \cite{Rocha}.

The paper is organized as follows. Section 2 begins with the preliminaries. In Section 3, we present the concept of molecule in 
$\mathbb{Z}^n$ and prove some of its basic properties. In Section 4, we obtain the molecular decomposition for $H^p(\mathbb{Z}^n)$, 
$\frac{n-1}{n} < p \leq 1$, and give a criterion for the $H^p(\mathbb{Z}^n) \to H^q(\mathbb{Z}^n)$ boundedness of certain linear operators, when $\frac{n-1}{n} < p \leq q \leq 1$. Finally, in Section 5 we prove the $H^{p}(\mathbb{Z}^n) \to H^{q}(\mathbb{Z}^n)$ boundedness for the discrete Riesz potential.

Throughout this paper, $C$ will denote a positive real constant not necessarily the same at each occurrence.

\section{Preliminaries} 

The following results will be useful in the study of the discrete molecules presented in Section 3 and in the obtaining of the 
$H^p(\mathbb{Z}^n) \to H^q(\mathbb{Z}^n)$ boundedness for the discrete Riesz potential which will be established in Section 5.

In the sequel, for $j =(j_1, ..., j_n) \in \mathbb{Z}^{n}$ we put $|j|_{\infty} = \max \{ |j_k| : k=1, ..., n \}$ and 
$|j| = (j_1^2 + \cdot \cdot \cdot + j_n^2 )^{1/2}$.

\begin{lemma} \label{series0} If $\epsilon > 0$ and $N \in \mathbb{N}$, then 
\begin{equation}
\sum_{|j|_{\infty} \geq N} \frac{1}{|j|^{n+\epsilon}} \leq 2^{n}n^{n+\epsilon} \left(2+\frac{2^{\frac{\epsilon}{n}} n}{\epsilon} \right)^{n} 
N^{-\epsilon}.
\end{equation}
\end{lemma}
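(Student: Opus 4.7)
The plan is a shell (level-set) decomposition of $\{j\in\mathbb{Z}^n:|j|_\infty\ge N\}$ combined with integral comparison.

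First, since $|j|\ge |j|_\infty$ for every $j\in\mathbb{Z}^n$, one has $|j|^{n+\epsilon}\ge|j|_\infty^{n+\epsilon}$, which reduces the problem to
\[
\sum_{|j|_\infty\ge N}\frac{1}{|j|^{n+\epsilon}}\;\le\;\sum_{k=N}^{\infty}\frac{\#\{j\in\mathbb{Z}^n:|j|_\infty=k\}}{k^{n+\epsilon}}.
\]
The shell count $(2k+1)^n-(2k-1)^n$ can be bounded by $2n(2k+1)^{n-1}$ via the mean value theorem applied to $x\mapsto x^n$, and then by $2n(3k)^{n-1}$ for $k\ge 1$, reducing the task to the one-dimensional tail $\sum_{k\ge N}k^{-1-\epsilon}$. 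This is handled by integral comparison: using convexity of $x\mapsto x^{-1-\epsilon}$, one has $\sum_{k\ge N}k^{-1-\epsilon}\le\int_{N-1/2}^{\infty}x^{-1-\epsilon}\,dx\le (2^{\epsilon}/\epsilon)N^{-\epsilon}$ for $N\ge 1$, producing both the $N^{-\epsilon}$ decay and the $1/\epsilon$ factor.

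To reach the precise form of the stated constant, with its product factor $(2+2^{\epsilon/n}n/\epsilon)^n$ and prefactor $2^n n^{n+\epsilon}$, I expect the argument to be cast coordinatewise. One distributes the exponent via an AM--GM inequality such as
\[
|j|^{n+\epsilon}\;\ge\;n^{(n+\epsilon)/2}\prod_{l=1}^{n}\max(|j_l|,1)^{1+\epsilon/n},
\]
obtained from $|j|^2\ge n(\prod_l|j_l|^2)^{1/n}$ together with a $\max(|j_l|,1)$-regularization to handle zero coordinates; the prefactor $2^n n^{n+\epsilon}$ tracks the normalization losses in this reduction. The resulting tail sum then factors as a product of $n$ identical one-dimensional sums of the form $\sum_t\max(|t|,1)^{-1-\epsilon/n}$, and each factor is bounded by an integral via the convexity estimate $t^{-1-\epsilon/n}\le\int_{t-1/2}^{t+1/2}x^{-1-\epsilon/n}\,dx$. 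This yields $\int_{|x|\ge 1/2}|x|^{-1-\epsilon/n}\,dx=2^{1+\epsilon/n}n/\epsilon$, and it is from this convexity-based integral comparison that both the $1/\epsilon$ factor and the mild $2^{\epsilon/n}$ correction in the stated bound emerge.

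The main obstacle is the combinatorial bookkeeping to arrive at the exact stated constant while preserving the $N^{-\epsilon}$ decay. The shell argument by itself already yields the correct decay with a much smaller constant, so the technical content of the proof lies in chaining the AM--GM product decomposition with the constraint $|j|_\infty\ge N$ in a way that retains the full $N^{-\epsilon}$ rate (rather than the weaker $N^{-\epsilon/n}$ rate that a naive union bound over the coordinates would produce) and simultaneously packages the one-dimensional integral bounds into the product factor $(2+2^{\epsilon/n}n/\epsilon)^n$.
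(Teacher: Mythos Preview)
Your shell decomposition in the first paragraph is correct and already yields $\sum_{|j|_\infty\ge N}|j|^{-n-\epsilon}\le C(n,\epsilon)\,N^{-\epsilon}$, which is all the paper ever uses. That is a genuinely different and arguably cleaner route than the paper's own argument.

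Your second paragraph, however, misidentifies the coordinatewise mechanism behind the stated constant. The proposed inequality
\[
|j|^{n+\epsilon}\ \ge\ n^{(n+\epsilon)/2}\prod_{l=1}^{n}\max(|j_l|,1)^{1+\epsilon/n}
\]
is false: for $j=(N,0,\dots,0)$ the left side equals $N^{n+\epsilon}$ while the right side equals $n^{(n+\epsilon)/2}N^{1+\epsilon/n}$, and the inequality reverses once $N$ is large. Moreover, even a correct product bound with factors $\max(|j_l|,1)$ could not produce $N^{-\epsilon}$ decay after summation, since each coordinate sum would then be an $N$-independent constant---precisely the obstacle you flag in your last paragraph but do not resolve.

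The paper's device is more elementary than AM--GM. From $|j|_\infty\ge N$ one has $\sum_l|j_l|\ge\max\{N,|j_k|\}$ for every $k$, whence
\[
\Bigl(\sum_{l}|j_l|\Bigr)^{n+\epsilon}=\prod_{k=1}^{n}\Bigl(\sum_{l}|j_l|\Bigr)^{1+\epsilon/n}\ \ge\ \prod_{k=1}^{n}\max\{N,|j_k|\}^{1+\epsilon/n},
\]
and together with $\sum_l|j_l|\le n\,|j|$ this gives a product lower bound on $|j|^{n+\epsilon}$ with the threshold $N$ built into every factor. After the $2^n$ sign reduction, each one-dimensional sum $\sum_{t\ge 0}\max\{N,t\}^{-1-\epsilon/n}$ is $N^{-\epsilon/n}+\sum_{t\ge N}t^{-1-\epsilon/n}\le(2+2^{\epsilon/n}n/\epsilon)N^{-\epsilon/n}$, and the $n$-fold product delivers $N^{-\epsilon}$. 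So the idea you are missing is not AM--GM but the trivial bound $\sum_l|j_l|\ge\max\{N,|j_k|\}$, which simultaneously factors the sum and threads the constraint $|j|_\infty\ge N$ through every coordinate.
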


\begin{proof} By Multinomial Theorem, we obtain that
\begin{equation} \label{multinomial ineq}
(|j_1| + |j_2| + \cdot \cdot \cdot + |j_n|)^{n + \epsilon} \geq \max \{N^{1+ \frac{\epsilon}{n}},|j_1|^{1+ \frac{\epsilon}{n}}\} \cdot 
\max \{N^{1+ \frac{\epsilon}{n}},|j_2|^{1+ \frac{\epsilon}{n}}\}
\end{equation}
\[
\cdot \cdot \cdot \max \{ N^{1+ \frac{\epsilon}{n}}, |j_n|^{1+ \frac{\epsilon}{n}} \},
\]
for all $j =(j_1, ..., j_n) \in \mathbb{Z}^{n}$ such that $|j|_{\infty} \geq N$.

On the other hand
\begin{equation} \label{estimate norm}
|j_1| + |j_2| + \cdot \cdot \cdot + |j_n| \leq n  (j_1^{2} + j_2^2 + \cdot \cdot \cdot + j_n^{2})^{1/2},
\end{equation}
for all $(j_1, ..., j_n) \in \mathbb{Z}^n$.

Now, by (\ref{estimate norm}) and (\ref{multinomial ineq}), for every $L \geq N$ we have
\[
\sum_{N \leq | j |_{\infty} \leq L} \frac{1}{|j|^{n+\epsilon}} \leq 2^{n} \sum_{j_n =0}^{L} \cdot \cdot \cdot 
\sum_{j_2 =0}^{L} \sum_{j_1 = N}^{L} \frac{1}{(j_1^{2} + j_2^2 + \cdot \cdot \cdot + 
j_n^{2})^{\frac{n + \epsilon}{2}}}.
\]
\[
\leq 2^{n} \sum_{j_n =0}^{L} \cdot \cdot \cdot \sum_{j_2 =0}^{L} 
\sum_{j_1 = N}^{L} \frac{n^{n + \epsilon}}{ \max \{N^{1+ \frac{\epsilon}{n}},|j_1|^{1+ \frac{\epsilon}{n}}\} \cdot 
\max \{N^{1+ \frac{\epsilon}{n}},|j_2|^{1+ \frac{\epsilon}{n}}\} 
\cdot \cdot \cdot \max \{ N^{1+ \frac{\epsilon}{n}}, |j_n|^{1+ \frac{\epsilon}{n}} \}}
\]
\[
\leq 2^{n}n^{n + \epsilon} \left( N^{-\frac{\epsilon}{n}} + \sum_{j_1=N}^{L} \frac{1}{j_1^{1 + \frac{\epsilon}{n}}} \right)^n.
\]
Finally, letting $L$ tend to infinity, we obtain
\[
\sum_{|j|_{\infty} \geq N} \frac{1}{|j|^{n+\epsilon}} \leq 2^{n}n^{n+\epsilon} \left(2+\frac{2^{\frac{\epsilon}{n}}n}{\epsilon} \right)^{n} 
N^{-\epsilon}.
\]
\end{proof}

\begin{lemma} (\cite[Example 3.2.10]{Grafakos}) \label{estim local}
Let $0 < \alpha < n$ and $\Phi \in \mathcal{S}(\mathbb{R}^n)$ such that $\widehat{\Phi}(\xi) = 1$ if $|\xi| \leq 1$ and
$\widehat{\Phi}(\xi) = 0$ if $|\xi| > 2$. For $x \in \mathbb{R}^n$ and $R > 0$, we put
\[
\mu_{\alpha, R}(x) = \displaystyle{\sum_{j \in \mathbb{Z}^n \setminus \{{\bf 0}\}}} |j|^{\alpha - n} \, \widehat{\Phi}(j/R) \, 
e^{2\pi i(j \cdot x)}.
\] 
Then,
\[
\lim_{R \to \infty}| \mu_{\alpha, R}(x) | \leq C |x|^{-\alpha}, \,\,\,\,\, \text{for all} \,\,\, 
x \in [-1/2, 1/2)^n \setminus \{ {\bf 0} \},
\]
where $C$ is independent of $x$.
\end{lemma}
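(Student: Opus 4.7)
The plan is to apply the Poisson summation formula to express $\mu_{\alpha,R}(x)$ in terms of the inverse Fourier transform of $\psi_R(\xi):=|\xi|^{\alpha-n}\widehat{\Phi}(\xi/R)$, and to isolate the main term as $R\to\infty$. The key analytic input is the classical identity $\mathcal{F}^{-1}[|\xi|^{\alpha-n}](x)=c_{n,\alpha}|x|^{-\alpha}$ valid on $\mathbb{R}^n$ for $0<\alpha<n$. Combined with $\mathcal{F}^{-1}[\widehat{\Phi}(\cdot/R)](y)=R^n\Phi(Ry)=:\Phi_R(y)$, the convolution theorem gives
\[
F_R(x):=\mathcal{F}^{-1}[\psi_R](x)=c_{n,\alpha}\bigl(|\cdot|^{-\alpha}\ast\Phi_R\bigr)(x),
\]
so $F_R(x)\to c_{n,\alpha}|x|^{-\alpha}$ for each $x\neq{\bf 0}$ as $R\to\infty$, since $\Phi_R$ is an approximate identity and $|\cdot|^{-\alpha}$ is continuous off the origin. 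Heuristically, with the convention $\psi_R({\bf 0})=0$, Poisson summation yields
\[
\mu_{\alpha,R}(x)=\sum_{j\in\mathbb{Z}^n}\psi_R(j)\,e^{2\pi ij\cdot x}=\sum_{k\in\mathbb{Z}^n}F_R(x+k),
\]
and the $k={\bf 0}$ term supplies the desired bound $C|x|^{-\alpha}$ in the limit.

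Because the spatial periodization $\sum_k F_R(x+k)$ is only conditionally convergent (as $F_R(y)\sim c_{n,\alpha}|y|^{-\alpha}$ at infinity), the rigorous route I would take is a direct estimation. Fix $x\in[-1/2,1/2)^n\setminus\{{\bf 0}\}$ and split
\[
\mu_{\alpha,R}(x)=\sum_{0<|j|\leq 1/|x|}\psi_R(j)\,e^{2\pi ij\cdot x}+\sum_{|j|>1/|x|}\psi_R(j)\,e^{2\pi ij\cdot x}.
\]
The low-frequency piece is bounded by $\|\widehat{\Phi}\|_\infty\sum_{0<|j|\leq 1/|x|}|j|^{\alpha-n}\leq C|x|^{-\alpha}$ via the integral comparison $\sum_{0<|j|\leq M}|j|^{\alpha-n}\leq CM^{\alpha}$ (finite since $\alpha>0$). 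For the high-frequency piece I would choose a coordinate $s$ with $|x_s|\geq|x|/\sqrt{n}$ and apply discrete summation-by-parts in the $s$-direction, using that the partial sums of $e^{2\pi ij_s x_s}$ are $O(|x|^{-1})$ and that the discrete derivatives of $|j|^{\alpha-n}\widehat{\Phi}(j/R)$ are $O(|j|^{\alpha-n-1})$ uniformly in $R$ (the extra factor $R^{-1}$ produced by differentiating $\widehat{\Phi}(\cdot/R)$ only helps).

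The main obstacle is that a single summation-by-parts produces a summable tail only when $\alpha<1$. To handle the full range $0<\alpha<n$ I would iterate the procedure $\lceil\alpha\rceil$ times, replacing first differences by higher-order differences and using $|D^k(|j|^{\alpha-n})|\leq C_k|j|^{\alpha-n-k}$; each iteration pays a factor $|x|^{-1}$ from the (iterated) partial sum of exponentials and gains a factor $|j|^{-1}$ in the summand, so after finitely many steps the remaining series is absolutely convergent and contributes $\leq C|x|^{-\alpha}$ uniformly in $R$. The boundary terms created at each summation-by-parts step are handled by the same scaling, since they live on shells of radius $\sim 1/|x|$ where $|j|^{\alpha-n-k}$ times the number of lattice points in the shell reproduces a contribution of the same order $|x|^{-\alpha}$.
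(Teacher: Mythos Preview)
The paper does not supply its own proof of this lemma; it is stated with a citation to \cite[Example~3.2.10]{Grafakos} and nothing more. Your direct-estimation approach --- splitting at the scale $|j|\sim 1/|x|$, bounding the inner sum $\sum_{0<|j|\le 1/|x|}|j|^{\alpha-n}\le C|x|^{-\alpha}$ by lattice-point counting, and controlling the tail by iterated Abel summation (summation by parts) in a coordinate direction with $|x_s|\gtrsim |x|$, using $|D^k(|j|^{\alpha-n}\widehat{\Phi}(j/R))|\le C_k|j|^{\alpha-n-k}$ uniformly in $R$ --- is precisely the argument given in that reference, so there is no genuine methodological difference to report.

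Two minor remarks. First, your opening Poisson-summation paragraph is, as you yourself say, only heuristic: the spatial periodization $\sum_k F_R(x+k)$ is not absolutely convergent, so the formal identity $\mu_{\alpha,R}(x)=\sum_k F_R(x+k)$ is not directly justified and is not used in the actual argument. It is fine as motivation but should not be presented as part of the proof. Second, the iterated summation-by-parts is needed exactly $\lfloor\alpha\rfloor+1$ times (so that the remaining exponent $\alpha-n-k<-n$ gives an absolutely convergent tail), and the boundary terms at each step indeed live on shells of radius comparable to $1/|x|$ and contribute $O(|x|^{-\alpha})$ by the same scaling; you state this correctly, though tersely. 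With those details filled in, your sketch reproduces the cited proof.
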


\begin{lemma} \label{FT atom}
Let $0 < p \leq 1 < p_0 < \infty$ and $L \geq d_p$. Given an $(p, p_0, L)$-atom $a = \{ a(j) \}_{j \in \mathbb{Z}^n}$, we put
\[
\widehat{a}(x) = \sum_{j \in \mathbb{Z}^n} a(j) e^{2\pi i (j \cdot x)}, \,\,\,\, x \in \mathbb{R}^n.
\]
Then
\[
|\widehat{a}(x)| \leq (2\pi)^{L+1} |x |^{L+1} \sum_{j \in \mathbb{Z}^n} |a(j)| |j|^{L+1} e^{2\pi\sqrt{n}|j|}, \,\,\,\,\, \text{for} \,\, 
|x| \leq \sqrt{n}.
\]
\end{lemma}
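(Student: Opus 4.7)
The plan is to expand the exponential in $\widehat{a}(x)$ as a Taylor series, use the vanishing moment conditions of the atom to annihilate the first $L+1$ terms, and then bound the tail using $|x| \leq \sqrt{n}$. Since $a$ is supported on a (finite) discrete cube $Q$, the sum defining $\widehat{a}$ is actually finite, and any interchange of summations below is trivially justified.

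First I would write, for $j \in \mathbb{Z}^n$,
\[
e^{2\pi i (j \cdot x)} = \sum_{k=0}^{\infty} \frac{(2\pi i (j\cdot x))^k}{k!},
\]
and expand $(j\cdot x)^k$ in monomials $j^\beta$ with $[\beta]=k$. For $k=0,1,\ldots,L$ each such monomial satisfies $[\beta]\leq L$, so by the moment condition (a3) we get $\sum_{j\in Q} a(j)(j\cdot x)^k = 0$. Therefore
\[
\widehat{a}(x) \;=\; \sum_{j\in \mathbb{Z}^n} a(j)\sum_{k=L+1}^{\infty} \frac{(2\pi i (j\cdot x))^k}{k!}.
\]

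Next I would apply the Cauchy--Schwarz bound $|j\cdot x|\leq |j|\,|x|$ and estimate
\[
\sum_{k=L+1}^{\infty} \frac{(2\pi |j|\,|x|)^k}{k!}
= (2\pi |x|)^{L+1}|j|^{L+1} \sum_{m=0}^{\infty} \frac{(2\pi |x|\,|j|)^{m}}{(m+L+1)!}.
\]
Using $|x|\leq \sqrt{n}$ and $(m+L+1)!\geq m!$, the inner series is bounded by $e^{2\pi\sqrt{n}\,|j|}$. Putting this into the expansion for $\widehat{a}(x)$ and taking absolute values yields the claimed inequality
\[
|\widehat{a}(x)| \leq (2\pi)^{L+1}|x|^{L+1}\sum_{j\in\mathbb{Z}^n} |a(j)|\,|j|^{L+1} e^{2\pi\sqrt{n}\,|j|}.
\]

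There is no real obstacle here: the argument is a direct Taylor expansion combined with moment cancellation. The only point that deserves care is keeping track of the constants and the elementary rearrangement that pulls out $(2\pi|x|)^{L+1}|j|^{L+1}$ so that the residual series becomes the full exponential $e^{2\pi\sqrt{n}\,|j|}$.
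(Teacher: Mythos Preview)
Your proof is correct and follows essentially the same approach as the paper: Taylor-expand the exponential, use the moment condition (a3) to kill the terms with $k\leq L$, and bound the tail via $|j\cdot x|\leq |j|\,|x|$ together with $(m+L+1)!\geq m!$ and $|x|\leq\sqrt{n}$ to extract the factor $(2\pi|x|)^{L+1}|j|^{L+1}e^{2\pi\sqrt{n}\,|j|}$. The only cosmetic difference is that you justify the cancellation by expanding $(j\cdot x)^k$ into monomials $j^\beta$, whereas the paper simply invokes the moment condition directly; the two arguments are the same in substance.
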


\begin{proof} From the identity $e^{2\pi i (j\cdot x)} = \sum_{k=0}^{\infty} \frac{(2\pi i (j\cdot x))^k}{k!}$, we have that
\[
e^{2\pi i (j\cdot x)} - \sum_{k=0}^{L} \frac{(2\pi i (j\cdot x))^k}{k!} = \sum_{k=L+1}^{\infty} \frac{(2\pi i (j\cdot x))^k}{k!}.
\]
So,
\begin{equation} \label{exp Taylor}
\left|e^{2\pi i (j\cdot x)} - \sum_{k=0}^{L} \frac{(2\pi i (j\cdot x))^k}{k!}\right| \leq (2\pi)^{L+1}|j|^{L+1}|x|^{L+1} e^{2\pi|j||x|}.
\end{equation}
Now, by the moment condition of the atom $a(\cdot)$, we have that
\begin{equation} \label{FT atom2}
\widehat{a}(x) = \sum_{j \in \mathbb{Z}^n} a(j) \left( e^{2\pi i (j \cdot x)} - \sum_{k=0}^{L} \frac{(2\pi i (j\cdot x))^k}{k!}  \right).
\end{equation}
Finally, (\ref{FT atom2}) and (\ref{exp Taylor}) lead to
\[
|\widehat{a}(x)| \leq  (2\pi)^{L+1} |x|^{L+1} \sum_{j \in \mathbb{Z}^n} |a(j)| |j|^{L+1} e^{2\pi\sqrt{n}|j|},
\]
for all $|x| \leq \sqrt{n}$.
\end{proof}

Given $R>0$, we consider the set $E_R$ of slowly increasing $C^{\infty}(\mathbb{R}^n)$ functions $f$ with 
$\supp \widehat{f} \subset [-R, R]^n$. The elements of $E_R$ are functions of exponential type $R$. For this class of functions we have the following result.

\begin{lemma} (\cite[Lemma 3]{Auscher}) \label{exp type}
Let $0 < p < \infty$ and $0 < R < \frac{1}{2}$, then there exists a positive constant $C$ such that
\[
\sum_{j \in \mathbb{Z}^n} |f(j)|^p \leq C \int_{\mathbb{R}^n} |f(x)|^p dx,
\]
for every function $f$ of exponential type $R$.
\end{lemma}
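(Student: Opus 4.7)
The strategy is a reproducing-kernel argument: express $f$ exactly as a convolution with a fixed Schwartz function, then use decay of that function plus the geometric fact that $R < 1/2$ (so the fundamental cubes of $\mathbb{Z}^n$ are "large enough" relative to the spectral support) to compare the discrete $\ell^p$-norm on $\mathbb{Z}^n$ with the continuous $L^p$-norm on $\mathbb{R}^n$.

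Step one: Since $R < 1/2$, fix once and for all a function $\phi \in \mathcal{S}(\mathbb{R}^n)$ with $\widehat{\phi}(\xi) = 1$ for $\xi \in [-R,R]^n$ and $\widehat{\phi}$ compactly supported. For any $f \in E_R$, the distributional identity $\widehat{f} = \widehat{f}\cdot\widehat{\phi}$ holds (because $\widehat{\phi}=1$ on $\supp \widehat{f}$), hence $f = f\ast\phi$ pointwise, so in particular
\[
f(j) = \int_{\mathbb{R}^n} f(y)\,\phi(j-y)\,dy, \qquad j \in \mathbb{Z}^n.
\]

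Step two (the case $p\ge 1$): Split the integral into the cubes $Q_k = k + [-1/2,1/2)^n$ for $k \in \mathbb{Z}^n$. Using the Schwartz decay $|\phi(x)| \le C_N(1+|x|)^{-N}$ and the fact that $|j-y|\asymp |j-k|$ for $y \in Q_k$, apply Hölder's inequality with weight $(1+|j-k|)^{-N}$, choosing $N > n$. Summing in $j$ and invoking Fubini yields
\[
\sum_{j\in\mathbb{Z}^n} |f(j)|^p \;\le\; C \sum_{k\in\mathbb{Z}^n} \int_{Q_k} |f(y)|^p\,dy \;=\; C\int_{\mathbb{R}^n} |f(y)|^p\,dy.
\]

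Step three (the case $0 < p < 1$): Hölder and Minkowski both fail, so one must instead establish a pointwise submean-value estimate: there exists a constant $C=C(p,n,\phi)$ such that, for every $f\in E_R$,
\[
|f(x)|^p \;\le\; C \int_{|x-y|\le 1} |f(y)|^p\,dy, \qquad x\in\mathbb{R}^n.
\]
This inequality is the Plancherel–Pólya / Peetre reverse estimate, and I would prove it by noting that $|f|^p$, being the $p$-th power of a band-limited function, is dominated by a convolution average on the scale $1/R$ (compare with the spherical mean-value property when $p=2$, bootstrapped from $p=\infty$ downwards by factoring $f = g_1 g_2$ into functions of type $R/2$, at which point $|f|^p = |g_1|^p|g_2|^p$ becomes amenable to Cauchy–Schwarz). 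Evaluating at $x=j$ and summing, the finite overlap of the unit balls $B(j,1)$ gives the claimed bound.

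The main obstacle is the reverse estimate in Step three: for $p<1$ the elementary convolution bound loses powers, so one must either push the Schwartz-kernel calculation through by subadditivity of $t\mapsto t^p$ or invoke the Peetre-type maximal inequality. Once this is in hand, however, the two regimes merge into a single clean statement and the dependence on $R$ drops out because the hypothesis $R<1/2$ has been used only to guarantee that $\widehat{\phi}\equiv 1$ on $\supp\widehat{f}$.
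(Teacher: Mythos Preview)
The paper does not supply a proof of this lemma at all; it is stated with a bare citation to \cite[Lemma~3]{Auscher} and then used as a black box in the proof of Proposition~\ref{norm mol}. So there is no in-paper argument to compare against, and your proposal stands on its own as a reconstruction of the standard Plancherel--P\'olya sampling inequality.

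Your Steps~1 and~2 are correct and are exactly the textbook route for $p\ge 1$. In Step~3 you correctly identify the crucial ingredient for $0<p<1$: a local submean-value estimate of the form $|f(x)|^p \le C\int_{|x-y|\le 1}|f(y)|^p\,dy$ for band-limited $f$, which after summing over $j\in\mathbb{Z}^n$ and using bounded overlap gives the result. That inequality is true and is precisely the Plancherel--P\'olya/Peetre estimate. However, the justification you sketch for it---``factoring $f=g_1 g_2$ into functions of type $R/2$''---is not a valid argument: a general entire function of exponential type $R$ does not admit such a factorisation with any control on the factors, and Cauchy--Schwarz on such a product would not recover an $L^p$ bound for $p<1$ anyway. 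The standard proofs go instead through either (i) the Peetre maximal inequality $\sup_{z}|f(x-z)|(1+R|z|)^{-n/p}\le C\bigl(M(|f|^p)(x)\bigr)^{1/p}$, or (ii) the subadditivity trick: from $f=f*\phi$ deduce $|f(j)|\le C\sum_k(1+|j-k|)^{-N}\sup_{Q_k}|f|$, raise to the $p$-th power using $(\sum a_k)^p\le\sum a_k^p$, choose $N>n/p$, and then invoke the local $L^\infty$--$L^p$ bound $\sup_{Q_k}|f|\le C\bigl(\int_{2Q_k}|f|^p\bigr)^{1/p}$ for band-limited $f$, which follows from a Bernstein-type derivative estimate. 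Replace your factoring sentence with either of these and the argument is complete.
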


\section{Molecules in $\mathbb{Z}^n$}

We introduce the concept of molecule in $\mathbb{Z}^n$ and will prove that the discrete molecules belong to $H^p(\mathbb{Z}^n)$.

\begin{definition}
Let $0 < p \leq 1 < p_0 \leq \infty$, $r > \frac{1}{p} - \frac{1}{p_0}$ and $d_p = \lfloor n (p^{-1} - 1) \rfloor$. A sequence 
$M = \{ M(j) \}_{j \in \mathbb{Z}^n}$ is called a $(p, p_0, r, d_p)$-molecule centered at $m_0 \in \mathbb{Z}^n$ if $M$ satisfies the following conditions:

(m1) $\mathcal{N}_{p, p_0, r}(M) := \| M \|_{\ell^{p_0}}^{1-\theta} \| | \cdot - m_0|^{n r} M \|_{\ell^{p_0}}^{\theta} < \infty$, where 
$\theta = (1/p - 1/p_0)/r$, 

(m2) $\displaystyle{\sum_{j \in \mathbb{Z}^n}} j^{\beta} M(j) =0$, for all multi-index $\beta=(\beta_1, ..., \beta_n) \in \mathbb{N}_0^n$ with $[\beta] \leq d_p$. 
\end{definition}
We call $\mathcal{N}_{p, p_0, r}(M)$ the molecule norm of $M$, which is also denoted by $\mathcal{N}(M)$.

\begin{remark} \label{atom is mol}
Let $0 < p \leq 1 < p_0 \leq \infty$, $L \geq d_p$ and $r > \frac{1}{p} - \frac{1}{p_0}$. If $a(\cdot)$ is an $(p, p_0, L)$-atom centered at a cube $Q \subset \mathbb{Z}^n$, then $a(\cdot)$ is a $(p, p_0, r, d_p)$-molecule centered at each $m_0 \in Q$ with $\mathcal{N}(a) \leq C$, where $C$ is independent of the atom $a(\cdot)$ and $m_0$.
\end{remark}

\begin{lemma} \label{M ellp}
Let $0 < p \leq 1 < p_0 \leq \infty$, $r > \frac{1}{p} - \frac{1}{p_0}$. If $M$ is a $(p, p_0, r, d_p)$-molecule centered at 
$m_0 \in \mathbb{Z}^n$, then
\[
\| M \|_{\ell^p(\mathbb{Z}^n)} \leq C \mathcal{N}(M),
\]
where $C$ depends only on $n$, $p$, $p_0$ and $r$.
\end{lemma}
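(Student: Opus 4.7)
The plan is to split $\mathbb{Z}^n$ into a ``near'' region $\{j : |j-m_0|_\infty \leq N\}$ and a ``far'' region $\{j : |j-m_0|_\infty > N\}$, estimate the $\ell^p$-norm of $M$ on each piece by Hölder's inequality, and then optimize over $N$. Write $A = \| M \|_{\ell^{p_0}}$ and $B = \| |\cdot - m_0|^{nr} M \|_{\ell^{p_0}}$, so that $\mathcal{N}(M) = A^{1-\theta} B^{\theta}$. We may assume $A, B > 0$, for otherwise $M \equiv 0$ and the statement is trivial. I would first treat the case $1 < p_0 < \infty$; the case $p_0 = \infty$ will follow by a simpler variant.

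On the near region, Hölder's inequality with exponents $p_0/p$ and $p_0/(p_0-p)$ (valid since $p_0 > 1 \geq p$) gives
\[
\sum_{|j-m_0|_\infty \leq N} |M(j)|^p \leq A^p \bigl( \#\{j : |j-m_0|_\infty \leq N\} \bigr)^{1-p/p_0} \leq C\, A^p N^{n(1-p/p_0)}.
\]
On the far region, I would write $|M(j)|^p = \bigl(|j-m_0|^{nr}|M(j)|\bigr)^p |j-m_0|^{-nrp}$ and apply Hölder again with the same pair of exponents, obtaining
\[
\sum_{|j-m_0|_\infty > N} |M(j)|^p \leq B^p \Bigl( \sum_{|j-m_0|_\infty > N} |j-m_0|^{-nrp\, p_0/(p_0 - p)} \Bigr)^{(p_0-p)/p_0}.
\]
The hypothesis $r > 1/p - 1/p_0$ is precisely what guarantees that the exponent $nrp\, p_0/(p_0-p)$ exceeds $n$, so Lemma \ref{series0} (after translation to $m_0$) applies and bounds the inner sum by $C N^{-\epsilon}$ for an explicit $\epsilon > 0$. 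A short calculation shows the final exponent on $N$ works out to $-np[r - 1/p + 1/p_0]$.

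To close, I would balance the two estimates by choosing $N$ so that $A^p N^{n(1-p/p_0)} \sim B^p N^{-np[r - 1/p + 1/p_0]}$; the powers of $N$ on the two sides sum to $npr$, giving the balanced choice $N \sim (B/A)^{1/(nr)}$ (taking a suitable integer ceiling, and using only the far-part estimate if this quantity is $< 1$). Substituting back and recalling $p\theta = (1-p/p_0)/r$ collapses the exponents exactly into $A^{p(1-\theta)} B^{p\theta} = \mathcal{N}(M)^p$. For $p_0 = \infty$, the argument simplifies: on the near region use $|M(j)| \leq A$ directly, and on the far region use $|M(j)| \leq B|j-m_0|^{-nr}$ together with Lemma \ref{series0} (whose hypothesis reduces to $r > 1/p$, i.e., $r > 1/p - 1/p_0$). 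The main place where care is needed is the exponent bookkeeping in the Hölder step on the far region, since the correct matching of powers after balancing is the whole point of the argument; everything else is routine.
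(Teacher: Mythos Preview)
Your approach is the paper's approach: split at $N\sim (B/A)^{1/(nr)}$, apply H\"older on each piece, balance. The exponent bookkeeping you outline is correct. But there is one genuine gap: you never invoke the vanishing-moment condition (m2), and without it the inequality is false --- take $M=\delta_{m_0}$, for which $A=1$, $B=0$, hence $\mathcal{N}(M)=0$ while $\|M\|_{\ell^p}=1$.

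The place (m2) is needed is exactly the boundary case you wave away with ``using only the far-part estimate if this quantity is $<1$''. The far region never contains $j=m_0$, so that phrase leaves $|M(m_0)|^p$ uncontrolled; and bounding it instead by $A^p$ via the near estimate is useless here, since when $B\ll A$ one has $A\gg\mathcal{N}(M)$. The paper fixes this at the very start: from $\sum_j M(j)=0$ one gets $|M(m_0)|\le\sum_{j\ne m_0}|M(j)|$, hence (as $p\le1$) $\sum_j|M(j)|^p\le 2\sum_{j\ne m_0}|M(j)|^p$, and a single H\"older over $j\ne m_0$ then yields the global bound $\|M\|_{\ell^p}\le C\,B$. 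This disposes of the small-$\sigma$ case immediately (if $\sigma\le 2$ then $B\le 2^{nr}A$, so $B\le C\,\mathcal{N}(M)$). With that one addition your argument is complete and coincides with the paper's.
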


\begin{proof} Without loss of generality we assume $m_0 = {\bf 0}$. From the moment condition (m2) of the molecule $M$, it follows that
\[
|M({\bf 0})| \leq  \sum_{j \neq {\bf 0}} |M(j)|.
\]
Then, for $0 < p \leq 1$
\[
\sum_{j \in \mathbb{Z}^n} |M(j)|^p \leq 2 \sum_{j \neq {\bf 0}} |M(j)|^p.
\]
By H\"older inequality, we have
\[
\sum_{j \neq {\bf 0}} |M(j)|^p \leq \left(\sum_{j \neq {\bf 0}} |j|^{-nrp(p_0/p)'} \right)^{1/(p_0/p)'}
\left( \sum_{j \neq {\bf 0}} |j|^{nrp_0}|M(j)|^{p_0} \right)^{p/p_0}.
\]
We observe that $rp(p_0/p)' -1 > 0$. Then, by applying Lemma \ref{series0} with $\epsilon = n\left(rp(p_0/p)' -1 \right)$ and $N=1$, we obtain
\begin{equation} \label{pp0 estimate}
\| M \|_{\ell^p(\mathbb{Z}^n)} \leq C \| |\cdot |^{nr} M  \|_{\ell^{p_0}(\mathbb{Z}^n)}.
\end{equation}
We put $\sigma = \left( \|M \|_{\ell^{p_0}}^{-1} \| |\cdot |^{nr} M  \|_{\ell^{p_0}} \right)^{1/nr}$. If $\sigma \leq 2$, then 
$\| |\cdot |^{nr} M  \|_{\ell^{p_0}} \leq 2^{nr} \|M \|_{\ell^{p_0}}$. This and (\ref{pp0 estimate}) lead to
\[
\| M \|_{\ell^p} \leq C \|M \|_{\ell^{p_0}}^{1-\theta} \| |\cdot |^{nr} M  \|_{\ell^{p_0}}^{\theta} = C \mathcal{N}(M),
\]
where $\theta = (1/p - 1/p_0)/r$. Now, we assume $\sigma > 2$, and put
\[
\sum_{j \in \mathbb{Z}^n} |M(j)|^p = \sum_{|j|_{\infty} < \lfloor \sigma \rfloor} |M(j)|^p + \sum_{|j|_{\infty} \geq \lfloor \sigma \rfloor} |M(j)|^p = J_1 + J_2.
\]
For $J_1$, by H\"older inequality, we have
\[
J_1 \leq \| M \|_{\ell^{p_0}}^p \left( \sum_{|j|_{\infty} < \lfloor \sigma \rfloor} 1  \right)^{1/(p_0/p)'} \leq C \| M \|_{\ell^{p_0}}^p
\sigma ^{n \left( 1-\frac{p}{p_0} \right)} = C \mathcal{N}(M)^p.
\]
For $J_2$, by H\"older inequality and Lemma \ref{series0} with $\epsilon = n\left(rp(p_0/p)' -1 \right)$ and $N=\lfloor \sigma \rfloor$, 
we obtain
\[
J_2 \leq \| |\cdot |^{nr} M  \|_{\ell^{p_0}}^p \left(\sum_{|j|_{\infty} \geq \lfloor \sigma \rfloor} |j|^{-nrp(p_0/p)'} \right)^{1/(p_0/p)'}
\]
\[
\leq C \| |\cdot |^{nr} M  \|_{\ell^{p_0}}^p \lfloor \sigma \rfloor^{-n \left(rp - \frac{1}{(p_0/p)'} \right)},
\]
since $\sigma > 2$, it follows that
\[
\leq C \| |\cdot |^{nr} M  \|_{\ell^{p_0}}^p (\sigma -1)^{-n \left(rp - \frac{1}{(p_0/p)'} \right)}
\]
\[
\leq C \| |\cdot |^{nr} M  \|_{\ell^{p_0}}^p \sigma^{-n \left(rp - \frac{1}{(p_0/p)'} \right)} = C \mathcal{N}(M)^p.
\]
This concludes the proof.
\end{proof}

\begin{lemma} \label{Mol cont}
Let $0 < p \leq 1 < p_0 \leq \infty$, $r > \frac{1}{p} - \frac{1}{p_0}$ and let $M$ be a $(p, p_0, r, d_p)$-molecule centered at 
$m_0 \in \mathbb{Z}^n$. For $\Phi \in \mathcal{S}(\mathbb{R}^n)$, we put
\[
\widetilde{M}(x) = \sum_{j \in \mathbb{Z}^n} M(j) \Phi(x-j), \,\,\,\, x \in \mathbb{R}^n.
\]
Then, $\widetilde{M}^d$ is a $(p, p_0, r, d_p)$-molecule centered at $m_0$ with
\[
\mathcal{N}(\widetilde{M}^d) \leq C \mathcal{N}(M),
\]
where $C$ does not depends on $M$, and $\widetilde{M}^d$ is the restriction of $\widetilde{M}$ to $\mathbb{Z}^n$.
\end{lemma}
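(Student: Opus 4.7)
The plan is routine convolution-type estimates combined with one subtle algebraic step at the end. Without loss of generality take $m_0 = \mathbf{0}$; then $\widetilde{M}^d(k) = \sum_j M(j)\Phi(k-j)$ is the discrete convolution $M \ast_{\mathbb{Z}^n} \Phi^d$, where $\Phi^d = \{\Phi(k)\}_{k \in \mathbb{Z}^n}$. Schwartz decay of $\Phi$ places both $\Phi^d$ and $\{|k|^{nr}\Phi(k)\}_k$ in $\ell^1(\mathbb{Z}^n)$, and these two weights drive every application of Young's inequality below.

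\textbf{Moment condition (m2).} I would write
\[
\sum_k k^\beta \widetilde{M}^d(k) \;=\; \sum_k \sum_j M(j)\, k^\beta \Phi(k-j),
\]
and justify interchanging the sums by absolute convergence. Lemma \ref{M ellp} (applied to $M$) gives $M \in \ell^p \subset \ell^1$, while the bound $\sum_k |k|^{[\beta]}|\Phi(k-j)| \leq C(1 + |j|^{[\beta]})$ together with H\"older controls $\sum_j |j|^{[\beta]}|M(j)|$ by $\||\cdot|^{nr}M\|_{\ell^{p_0}}$; the strict assumption $r > 1/p - 1/p_0$ supplies exactly the margin needed, as $1/p - 1/p_0 \geq 1 - 1/p_0 + d_p/n$ follows from $n(1/p-1) \geq d_p$. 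After swapping, the substitution $k = j+m$ expands the inner sum as $\sum_{\gamma \leq \beta}\binom{\beta}{\gamma}\bigl(\sum_m m^\gamma \Phi(m)\bigr) j^{\beta-\gamma}$, and since each $[\beta-\gamma] \leq [\beta] \leq d_p$ the moment condition on $M$ annihilates every term.

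\textbf{Size condition (m1).} Discrete Young's inequality immediately gives $\|\widetilde{M}^d\|_{\ell^{p_0}} \leq \|\Phi^d\|_{\ell^1}\|M\|_{\ell^{p_0}}$. For the weighted norm, I would use the elementary inequality $|k|^{nr} \leq C(|k-j|^{nr} + |j|^{nr})$ to dominate $|k|^{nr}|\widetilde{M}^d(k)|$ pointwise by the sum of the two convolutions $|M| \ast \{|\cdot|^{nr}|\Phi|\}$ and $\{|\cdot|^{nr}|M|\} \ast |\Phi^d|$, and two further applications of Young yield
\[
\||\cdot|^{nr}\widetilde{M}^d\|_{\ell^{p_0}} \;\leq\; C\bigl(\|M\|_{\ell^{p_0}} + \||\cdot|^{nr}M\|_{\ell^{p_0}}\bigr).
\]
Setting $A = \|M\|_{\ell^{p_0}}$ and $B = \||\cdot|^{nr}M\|_{\ell^{p_0}}$, subadditivity of $t \mapsto t^\theta$ for $\theta \in (0,1)$ combines these two bounds into $\mathcal{N}(\widetilde{M}^d) \leq C A^{1-\theta}(A+B)^\theta \leq C(A + A^{1-\theta}B^\theta) = C\bigl(A + \mathcal{N}(M)\bigr)$.

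\textbf{The real obstacle.} The conclusion now requires absorbing the leftover $A$ into $\mathcal{N}(M)$, and this is where the argument hinges on the previous lemma: by Lemma \ref{M ellp} together with the monotonicity $\|M\|_{\ell^{p_0}} \leq \|M\|_{\ell^p}$ (valid for sequences when $p \leq p_0$), one gets $A \leq C\mathcal{N}(M)$. Without this observation the weighted estimate would produce an intrinsically wrong right-hand side; the individual estimates are routine, but recognizing that the fix must come from applying the preceding lemma to $M$ itself (and not to $\widetilde{M}^d$) is the real content of the proof.
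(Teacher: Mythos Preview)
Your argument is correct. The paper does not supply a self-contained proof of this lemma but simply refers to Lemma~2 of Kanjin--Satake \cite{Kanjin}; your route---Young's inequality for the unweighted $\ell^{p_0}$ norm, the splitting $|k|^{nr}\le C(|k-j|^{nr}+|j|^{nr})$ for the weighted one, and the absorption $\|M\|_{\ell^{p_0}}\le \|M\|_{\ell^p}\le C\,\mathcal{N}(M)$ via Lemma~\ref{M ellp}---is the standard argument and is precisely what that reference carries out in dimension one.
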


\begin{proof} The proof is similar to the one given in \cite[Lemma 2]{Kanjin}.
\end{proof}

\begin{proposition} \label{norm mol}
Let $n \geq 2$, $\frac{n-1}{n} < p \leq 1 < p_0 \leq \infty$, and $r > \frac{1}{p} - \frac{1}{p_0}$. If $M$ is a $(p, p_0, r, 0)$-molecule, 
then $M \in H^p(\mathbb{Z}^n)$ with 
\[
\| M \|_{H^p(\mathbb{Z}^n)} \leq C \mathcal{N}(M),
\] 
where $C$ is independent of $M$.
\end{proposition}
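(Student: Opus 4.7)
The natural approach is to invoke the Riesz characterization: since $\frac{n-1}{n}<p\leq 1$ and $n\geq 2$, \cite[Theorem 2.6]{Carro} gives $H^p(\mathbb{Z}^n)=H^p_{\text{Riesz}}(\mathbb{Z}^n)$ with equivalent quasinorms, so it suffices to show
\[
\|M\|_{\ell^p(\mathbb{Z}^n)} + \sum_{s=1}^{n} \|R_s^d M\|_{\ell^p(\mathbb{Z}^n)} \leq C\,\mathcal{N}(M).
\]
The first term is already handled by Lemma \ref{M ellp}, so the whole content of the proposition lies in the second. I would mirror the two-case scheme from the proof of Lemma \ref{M ellp}: assume $m_0=\mathbf{0}$ and set $\sigma=\bigl(\|M\|_{\ell^{p_0}}^{-1}\||\cdot|^{nr}M\|_{\ell^{p_0}}\bigr)^{1/(nr)}$. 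When $\sigma\leq 2$, H\"older together with the $\ell^{p_0}\to\ell^{p_0}$ boundedness of $R_s^d$ gives the desired bound directly. When $\sigma>2$, split $\sum_m|(R_s^d M)(m)|^p$ into the near part $|m|_\infty<\lfloor\sigma\rfloor$ and the far part $|m|_\infty\geq\lfloor\sigma\rfloor$; the near part is handled in exact analogy with the $J_1$ step of Lemma \ref{M ellp}, using $\|R_s^d M\|_{\ell^{p_0}}\leq C\|M\|_{\ell^{p_0}}$ and yielding a contribution of $C\,\mathcal{N}(M)^p$.

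For the far part I would use the moment vanishing $\sum_j M(j)=0$ to rewrite
\[
(R_s^d M)(m) = \sum_{j\in\mathbb{Z}^n} M(j)\,\bigl[K_s^d(m-j)-K_s^d(m)\bigr],
\]
and split the $j$-sum into three subregions: (i) $|j|\leq|m|/2$; (ii) $|j|>|m|/2$ with $|m-j|\geq|m|/4$; (iii) $|m-j|<|m|/4$. In (i), the smoothness estimate $|K_s^d(m-j)-K_s^d(m)|\leq C|j|/|m|^{n+1}$ (from $K_s^d\in C^\infty(\mathbb{R}^n\setminus\{\mathbf{0}\})$, homogeneous of degree $-n$) combined with a weighted H\"older inequality pairing $|j|^{nr}|M(j)|$ against $|j|^{1-nr}$ produces the pointwise decay $|(R_s^d M)(m)|\leq C\||\cdot|^{nr}M\|_{\ell^{p_0}}\,|m|^{-n(1/p_0+r)}$. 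In (ii), the crude bounds $|K_s^d(m-j)|\leq|m-j|^{-n}$ and $|K_s^d(m)|\leq|m|^{-n}$, combined with the inequality $r>\frac1p-\frac1{p_0}\geq\frac1{p_0'}$ (which forces $\sum_{|j|>|m|/2}|j|^{-nrp_0'}$ to converge via Lemma \ref{series0}), deliver the same decay. Raising to the $p$-th power, summing over $|m|_\infty\geq\lfloor\sigma\rfloor$ through Lemma \ref{series0}, and using $\||\cdot|^{nr}M\|_{\ell^{p_0}}=\|M\|_{\ell^{p_0}}\sigma^{nr}$ collapses both contributions to $C\,\mathcal{N}(M)^p$.

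The main obstacle is region (iii), where $|j|$ is comparable to $|m|$ so neither moment cancellation nor kernel smoothness is available. My plan here is to abandon pointwise estimates: decompose $\{m:|m|_\infty\geq\lfloor\sigma\rfloor\}$ into dyadic shells $A_k=\{2^k\lfloor\sigma\rfloor\leq|m|_\infty<2^{k+1}\lfloor\sigma\rfloor\}$ for $k\geq 0$, and restrict $M$ to $M_k:=M\cdot\chi_{\{|j|\sim 2^k\sigma\}}$, which satisfies $\|M_k\|_{\ell^{p_0}}\leq C\,(2^k\sigma)^{-nr}\||\cdot|^{nr}M\|_{\ell^{p_0}}$. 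On $A_k$ the region-(iii) contribution is pointwise dominated by a truncated Riesz transform of $M_k$, so applying the $\ell^{p_0}$-boundedness of $R_s^d$ together with H\"older on $A_k$ (of cardinality comparable to $(2^k\sigma)^n$) bounds it by $C\,(2^k\sigma)^{n(1-p/p_0)-nrp}\||\cdot|^{nr}M\|_{\ell^{p_0}}^{p}$. The geometric series over $k\geq 0$ converges because the exponent $n(1-p/p_0)-nrp$ is negative, which is precisely equivalent to $r>\frac1p-\frac1{p_0}$; collecting factors returns $C\,\mathcal{N}(M)^p$. This is the decisive structural step, and it is where the defining inequality of the molecule is finally exhausted.
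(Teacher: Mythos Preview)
Your strategy is entirely different from the paper's. Rather than estimating $R_s^d M$ by direct kernel analysis, the paper \emph{transfers the problem to $\mathbb{R}^n$}: it fixes a band-limited $\Phi\in\mathcal{S}(\mathbb{R}^n)$, forms $\widetilde{M}(x)=\sum_j M(j)\Phi(x-j)$, shows via Lemma~\ref{Mol cont} and the argument of \cite[Proposition~1]{Kanjin} that $\widetilde{M}$ is a continuous $p$-molecule with $\widetilde{\mathcal{N}}(\widetilde{M})\leq C\,\mathcal{N}(M)$, invokes Taibleson--Weiss to place $\widetilde{M}\in H^p(\mathbb{R}^n)$, and then uses the $H^p(\mathbb{R}^n)\to L^p(\mathbb{R}^n)$ boundedness of the continuous Riesz transform $R_s$ together with the sampling Lemma~\ref{exp type} to control $\|(R_s\widetilde{M})^d\|_{\ell^p}$. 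The remaining piece $(R_s\widetilde{M})^d-R_s^d M$ is bounded by $C\|M\|_{\ell^p}\leq C\,\mathcal{N}(M)$ via the comparison in the proof of \cite[Theorem~2.6]{Carro}. All cancellation and regularity is thus outsourced to the existing continuous theory; no near/far splitting of $(R_s^d M)(m)$ ever appears. Your route is more self-contained, but it purchases this at the price of reproducing on the lattice estimates that the paper imports for free from \cite{Taible}.

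Two steps in your sketch do not go through as written. First, you invoke the $\ell^{p_0}\!\to\ell^{p_0}$ boundedness of $R_s^d$ (for the small-$\sigma$ case, the near part $J_1$, and region~(iii)); this is unavailable when $p_0=\infty$, which the statement permits. Second, the weighted H\"older in region~(i) does not yield the announced decay $|m|^{-n(1/p_0+r)}$ for every admissible $r$: once $(1-nr)p_0'\leq -n$ the dual sum $\sum_{1\leq|j|\leq|m|/2}|j|^{(1-nr)p_0'}$ is merely $O(1)$, so the method gives only $|m|^{-(n+1)}$, and a direct computation shows that after summing over $|m|_\infty\geq\lfloor\sigma\rfloor$ this fails to be dominated by $C\,\mathcal{N}(M)^p$ precisely when $r>1+\tfrac1n-\tfrac1{p_0}$. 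Both gaps are repairable with extra work --- for the second, H\"older gives $\mathcal{N}_{p,p_0,r'}(M)\leq\mathcal{N}_{p,p_0,r}(M)$ whenever $\tfrac1p-\tfrac1{p_0}<r'<r$, so one may reduce to $r$ just above the threshold --- but they need to be addressed for the argument to be complete.
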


\begin{proof}
For $n \geq 2$, $\frac{n-1}{n} < p \leq 1$ we have $d_p = 0$, by Lemma \ref{M ellp}, (\ref{Riesz norm}) and \cite[Theorem 2.6]{Carro}, it is enough to show 
\begin{equation} \label{norma Riesz}
\sum_{s=1}^{n}\| R^d_s M \|_{\ell^p(\mathbb{Z}^n)} \leq C \mathcal{N}(M),
\end{equation}
for discrete $(p, p_0, r, 0)$-molecules $M$. From the invariance by translations of the operator $R^d_s$, we can assume that the molecules are centered at ${\bf 0}$. Then, we fix a real number $0 < R < \frac{1}{2}$ and let $\Phi$ be a radial function of $\mathcal{S}(\mathbb{R}^n)$ such that $\supp \widehat{\Phi} \subset [-R, R]^n$ with $\widehat{\Phi}(\xi) \equiv 1$ on the ball $B({\bf 0}, R/2)$. Given a discrete $(p, p_0, r, 0)$-molecule $M$, by Lemma \ref{M ellp}, we have that 
$M \in \ell^p{(\mathbb{Z}^n)} \subset \ell^1{(\mathbb{Z}^n)}$, and so
\[
\widetilde{M}(x) = \sum_{j \in \mathbb{Z}^n} M(j) \Phi(x-j) \in L^p(\mathbb{R}^n) \cap L^1(\mathbb{R}^n) \cap E_R \subset L^2(\mathbb{R}^n).
\]
For $s=1, ..., n$ and $x \in \mathbb{R}^n$, using Fourier’s inversion theorem, we introduce the continuous Riesz transforms $R_s$ acting on 
$\widetilde{M}$
\begin{eqnarray*}
(R_s \widetilde{M})(x) &=& \left( \xi \to -i C_n \frac{\xi_s}{|\xi|} \widehat{\widetilde{M}}(\xi) \right)^{\bigvee}(x) \\
&=& \int_{\mathbb{R}^n} (-i) C_n \frac{\xi_s}{|\xi|} \sum_{j \in \mathbb{Z}^n} M(j) \, \widehat{\Phi}(\xi)  \, e^{2\pi i(x-j) \cdot \xi} \\
&=& \sum_{j \in \mathbb{Z}^n} M(j) (R_s \Phi)(x - j).
\end{eqnarray*}
Proceeding as in the proof of Theorem 2.6 in \cite{Carro}, and by Lemma \ref{M ellp}, we obtain
\begin{equation} \label{norma Riesz2}
\left\| (R_s \widetilde{M})^d - R_s^d M \right\|_{\ell^p(\mathbb{Z}^n)} \leq C \| M \|_{\ell^p(\mathbb{Z}^n)} \leq C \mathcal{N}(M),
\end{equation}
where $(R_s \widetilde{M})^d$ is the restriction of $R_s \widetilde{M}$ to $\mathbb{Z}^n$. 

On the other hand, Lemma \ref{exp type} applied to the function $R_s \widetilde{M}$ of exponential type $R$ allows us to obtain
\begin{equation} \label{ellpLp}
\| (R_s \widetilde{M})^d \|_{\ell^p(\mathbb{Z}^n)} \leq C \| R_s \widetilde{M} \|_{L^p(\mathbb{R}^n)}.
\end{equation}
Finally, according to the ideas to estimate $(9)$ in \cite[Proposition 1]{Kanjin} and Lemma \ref{Mol cont}, we obtain that $\widetilde{M}$ 
is a $p$-molecule centered at ${\bf 0}$ in $\mathbb{R}^n$ with 
\begin{equation} \label{normas N}
\widetilde{\mathcal{N}}(\widetilde{M}) \leq C \mathcal{N}(\widetilde{M}^d) \leq C \mathcal{N}(M),
\end{equation}
where $C$ is independent of $M$ and  $\widetilde{\mathcal{N}}(\widetilde{M})$ is the continuous molecule norm. Thus
$\widetilde{M} \in H^p(\mathbb{R}^n)$. Since $R_s$ is a bounded operator $H^p(\mathbb{R}^n) \to L^p(\mathbb{R}^n)$ for $0 < p \leq 1$, 
then (\ref{ellpLp}) and (\ref{normas N}) lead to
\[
\| (R_s \widetilde{M})^d \|_{\ell^p(\mathbb{Z}^n)} \leq C \| R_s \widetilde{M} \|_{L^p(\mathbb{R}^n)} \leq 
C \| \widetilde{M} \|_{H^p(\mathbb{R}^n)} \leq 
\widetilde{\mathcal{N}}(\widetilde{M}) \leq C \mathcal{N}(M).
\]
So, this inequality and (\ref{norma Riesz2}) give (\ref{norma Riesz}). Therefore the proof is concluded.
\end{proof}

\section{Molecular decomposition for $H^p(\mathbb{Z}^n)$}

In this section, we establish a molecular reconstruction theorem for $H^p(\mathbb{Z}^n)$, $\frac{n-1}{n} < p \leq 1$. As an application of this result, we give a criterion for the $H^p(\mathbb{Z}^n) \to H^q(\mathbb{Z}^n)$ boundedness of certain linear operators, when 
$\frac{n-1}{n} < p \leq q \leq 1$.

\begin{theorem} \label{mol decomp}
Let $n \geq 2$, $\frac{n-1}{n} < p \leq 1 < p_0 \leq \infty$, $r > \frac{1}{p} - \frac{1}{p_0}$. 
If $\{ M_k \}_{k=1}^{\infty}$ is a sequence of $(p, p_0, r, 0)$-molecules in $\mathbb{Z}^n$ such that 
$\sum_{k=1}^{\infty} \mathcal{N}(M_k)^p < \infty$, then the series $\sum_{k=1}^{\infty} M_k$ converges to a sequence
$h$ in $H^p(\mathbb{Z}^n)$ and
\begin{equation}  \label{h}
\| h \|_{H^p(\mathbb{Z}^n)}^p \leq C \sum_{k=1}^{\infty} \mathcal{N}(M_k)^p,
\end{equation}
where $C$ does not depend on the molecules $M_k$.
\end{theorem}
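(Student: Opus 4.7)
The strategy is to leverage Proposition \ref{norm mol} --- which provides $\|M\|_{H^p(\mathbb{Z}^n)} \leq C\mathcal{N}(M)$ for an individual molecule --- together with the $p$-subadditivity of the $\ell^p$-quasi-norm valid for $0 < p \leq 1$. The whole argument is then a standard completeness/convergence argument in the quasi-Banach space $H^p(\mathbb{Z}^n)$; no further harmonic analysis is needed.

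First I would set $S_N := \sum_{k=1}^N M_k \in H^p(\mathbb{Z}^n)$ and show that $\{S_N\}$ is Cauchy. For $N > M$, applying $\|f+g\|_{\ell^p}^p \leq \|f\|_{\ell^p}^p + \|g\|_{\ell^p}^p$ separately to each of the two summands in $\|\cdot\|_{H^p(\mathbb{Z}^n)} = \|\cdot\|_{\ell^p} + \|\sup_t |P_t^d \ast_{\mathbb{Z}^n} \cdot|\|_{\ell^p}$, together with the pointwise bound $\sup_t |P_t^d \ast_{\mathbb{Z}^n}(S_N - S_M)| \leq \sum_{k=M+1}^N \sup_t |P_t^d \ast_{\mathbb{Z}^n} M_k|$ and Proposition \ref{norm mol}, I obtain
\[
\|S_N - S_M\|_{H^p(\mathbb{Z}^n)}^p \leq C \sum_{k=M+1}^N \|M_k\|_{H^p(\mathbb{Z}^n)}^p \leq C \sum_{k=M+1}^N \mathcal{N}(M_k)^p,
\]
which tends to $0$ as $M,N \to \infty$ by hypothesis.

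Next I would identify the limit. Since $\|\cdot\|_{\ell^p} \leq \|\cdot\|_{H^p}$, the sequence $\{S_N\}$ is also Cauchy in the complete space $\ell^p(\mathbb{Z}^n)$, so $S_N \to h$ in $\ell^p$ and pointwise at every $j \in \mathbb{Z}^n$. Letting $N \to \infty$ in the displayed estimate with $M$ fixed --- using the coordinatewise Fatou inequality $\sup_t |P_t^d \ast_{\mathbb{Z}^n}(h - S_M)|(j) \leq \liminf_N \sup_t |P_t^d \ast_{\mathbb{Z}^n}(S_N - S_M)|(j)$ for each $j$ --- yields $\|h - S_M\|_{H^p(\mathbb{Z}^n)}^p \leq C\sum_{k=M+1}^\infty \mathcal{N}(M_k)^p$, so $S_N \to h$ in $H^p(\mathbb{Z}^n)$. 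Specializing to $M = 0$ delivers the estimate (\ref{h}).

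The main substantive ingredient is Proposition \ref{norm mol} itself; the present reconstruction theorem is a soft consequence of it. The only mildly delicate step is commuting the $\sup_t$ with the pointwise limit via Fatou, but since everything lives on the countable set $\mathbb{Z}^n$ and convolution against the bounded discrete kernel $P_t^d$ is pointwise continuous in its input, this presents no real obstacle.
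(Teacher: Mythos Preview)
Your proof is correct and follows essentially the same route as the paper: use Proposition \ref{norm mol} together with $p$-subadditivity to show the partial sums are Cauchy in $H^p(\mathbb{Z}^n)$, then pass to the limit. The only difference is cosmetic---the paper simply invokes completeness of $H^p(\mathbb{Z}^n)$ with respect to $d(f,g)=\|f-g\|_{H^p}^p$, whereas you spell out the limit identification via $\ell^p$-convergence and a Fatou-type argument, which amounts to reproving that completeness in situ.
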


\begin{proof}
For each $L \in \mathbb{N}$, we consider $S_{L} = \sum_{k=1}^{L} M_k$. By proposition \ref{norm mol} we have that
$\| S_{\widetilde{L}} - S_L \|_{H^p(\mathbb{Z}^n)}^p \leq \sum_{k=L}^{\widetilde{L}} \mathcal{N}(M_k)^p$ for every $L \leq \widetilde{L}$. Since $\sum_{k=1}^{\infty} \mathcal{N}(M_k)^p < \infty$, from the completeness of $H^p(\mathbb{Z}^n)$ with respect to the metric 
$d(f,g) = \| f-g \|_{H^p(\mathbb{Z}^n)}^p$, it follows that there exists $h \in H^p(\mathbb{Z}^n)$ such that $h=\sum_{k=1}^{\infty} M_k$ in 
$H^p(\mathbb{Z}^n)$. Finally, the inequality 
$\| h \|_{H^p(\mathbb{Z}^n)}^p \leq \| h - S_L \|_{H^p(\mathbb{Z}^n)}^p + \| S_L \|_{H^p(\mathbb{Z}^n)}^p$ gives (\ref{h}).
\end{proof}

\begin{remark}
By Remarks \ref{infinity atom} and \ref{atom is mol}, we have that every $(p, \infty, L)$-atom is a $(p, p_0, r, 0)$-molecule. Then, Theorem \ref{atom decomp} and Theorem \ref{mol decomp} give the molecular characterization for $H^p(\mathbb{Z}^n)$, $\frac{n-1}{n} < p \leq 1$.
\end{remark}

\begin{corollary} \label{Criterion}
Let $n \geq 2$, $\frac{n-1}{n} < p \leq q \leq 1 < p_0 \leq q_0 \leq \infty$, $r > \frac{1}{q} - \frac{1}{q_0}$, $L \geq 0$ and let $T$ be a bounded linear operator $\ell^{p_0}(\mathbb{Z}^n) \to \ell^{q_0}(\mathbb{Z}^n)$ such that $Ta(\cdot)$ is a $(q, q_0, r, 0)$-molecule for all 
$(p, \infty, L)$-atom $a(\cdot)$. Suppose that there exists an universal positive constant $C_0$ such that 
$\mathcal{N}(Ta) \leq C_0$ for all $(p, \infty, L)$-atom $a(\cdot)$, then there exists a positive constant $C$ such that
\begin{equation} \label{acot HpHq}
\| Tb \|_{H^q(\mathbb{Z}^n)} \leq C \| b \|_{H^p(\mathbb{Z}^n)},
\end{equation}
for all $b \in H^p(\mathbb{Z}^n)$.
\end{corollary}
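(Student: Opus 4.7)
The plan is to combine the atomic decomposition of Theorem \ref{atom decomp} at the input side with the molecular reconstruction of Theorem \ref{mol decomp} at the output side, using the hypothesis that $T$ sends $(p,\infty,L)$-atoms to $(q,q_0,r,0)$-molecules with uniformly bounded molecular norm.

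First, given $b \in H^p(\mathbb{Z}^n)$, I would apply Theorem \ref{atom decomp} to write $b = \sum_{k} \lambda_k a_k$, where each $a_k$ is a $(p,\infty,L)$-atom and $\sum_k |\lambda_k|^p \leq C \|b\|_{H^p(\mathbb{Z}^n)}^p$. Since the series converges in $H^p(\mathbb{Z}^n)$, and $H^p(\mathbb{Z}^n) \hookrightarrow \ell^p(\mathbb{Z}^n) \hookrightarrow \ell^{p_0}(\mathbb{Z}^n)$ (as $p \leq p_0$ and we are on counting measure), the partial sums also converge to $b$ in $\ell^{p_0}(\mathbb{Z}^n)$. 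Then the boundedness $T : \ell^{p_0}(\mathbb{Z}^n) \to \ell^{q_0}(\mathbb{Z}^n)$ gives
\[
Tb = \sum_{k} \lambda_k\, T a_k \quad \text{in } \ell^{q_0}(\mathbb{Z}^n).
\]

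Next, by hypothesis each $Ta_k$ is a $(q,q_0,r,0)$-molecule with $\mathcal{N}(Ta_k) \leq C_0$, hence $\lambda_k Ta_k$ is a $(q,q_0,r,0)$-molecule with $\mathcal{N}(\lambda_k T a_k) = |\lambda_k|\mathcal{N}(Ta_k) \leq C_0 |\lambda_k|$. Since $p \leq q \leq 1$ implies the elementary embedding inequality $\sum_k |\lambda_k|^q \leq \bigl(\sum_k |\lambda_k|^p\bigr)^{q/p}$, we get
\[
\sum_{k} \mathcal{N}(\lambda_k Ta_k)^q \;\leq\; C_0^q \sum_k |\lambda_k|^q \;\leq\; C_0^q \Bigl( \sum_k |\lambda_k|^p \Bigr)^{q/p} \;\leq\; C \|b\|_{H^p(\mathbb{Z}^n)}^q < \infty.
\]
Therefore Theorem \ref{mol decomp} applies and produces $h \in H^q(\mathbb{Z}^n)$ with $h = \sum_k \lambda_k Ta_k$ in $H^q(\mathbb{Z}^n)$ and $\|h\|_{H^q(\mathbb{Z}^n)}^q \leq C \sum_k \mathcal{N}(\lambda_k Ta_k)^q$, which chained with the previous estimate yields (\ref{acot HpHq}).

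The only delicate point, and the step I expect to require the most care, is identifying $h$ with $Tb$: the series $\sum_k \lambda_k Ta_k$ is shown to converge in two different topologies (in $H^q(\mathbb{Z}^n)$ to $h$, and in $\ell^{q_0}(\mathbb{Z}^n)$ to $Tb$). This is resolved by noting that $H^q(\mathbb{Z}^n) \hookrightarrow \ell^q(\mathbb{Z}^n) \hookrightarrow \ell^{q_0}(\mathbb{Z}^n)$, so convergence in $H^q(\mathbb{Z}^n)$ forces convergence in $\ell^{q_0}(\mathbb{Z}^n)$; by uniqueness of the $\ell^{q_0}$-limit, $h = Tb$, and the estimate above is the desired bound.
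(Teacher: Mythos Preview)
Your proof is correct and follows essentially the same route as the paper: atomic decomposition of $b$ in $H^p(\mathbb{Z}^n)$, passage to $\ell^{p_0}$ via the inclusion $\ell^p\hookrightarrow\ell^{p_0}$, termwise application of $T$, and reconstruction in $H^q(\mathbb{Z}^n)$ via Theorem~\ref{mol decomp} applied to the molecules $\lambda_k Ta_k$. You are in fact slightly more explicit than the paper in two places: you spell out the embedding $\sum_k|\lambda_k|^q\leq(\sum_k|\lambda_k|^p)^{q/p}$ needed to verify the hypothesis of Theorem~\ref{mol decomp}, and you justify the identification $h=Tb$ via uniqueness of the $\ell^{q_0}$-limit, whereas the paper does it pointwise --- but both arguments are equivalent.
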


\begin{proof}
For $n \geq 2$ and $\frac{n-1}{n} < p \leq q \leq 1$, we have that $d_p = d_q = 0$. Given $b \in H^p(\mathbb{Z}^n)$ and $L \geq 0$, by Theorem \ref{atom decomp}, there exist a sequence of real numbers $\{ \lambda_k \}_{k=1}^{\infty}$, a sequence of $(p, \infty, L)$-atoms 
$\{ a_k \}_{k=1}^{\infty}$ such that $b = \sum_{k=1}^{\infty} \lambda_k a_k$ converges in $\ell^{p}(\mathbb{Z}^n)$ and 
\begin{equation} \label{norm Hp}
\left(\sum_{k=1}^{\infty} |\lambda_k|^p \right)^{1/p} \leq C \| b \|_{H^p(\mathbb{Z}^n)}.
\end{equation}
Since $\ell^{p}(\mathbb{Z}^n) \subset \ell^{p_0}(\mathbb{Z}^n)$ embed continuously, we have that $b = \sum_k \lambda_k a_k$ in 
$\ell^{p_0}(\mathbb{Z}^n)$. Being $T$ a bounded operator $\ell^{p_0}(\mathbb{Z}^n) \to \ell^{q_0}(\mathbb{Z}^n)$, we obtain that 
\begin{equation} \label{Tb}
(Tb)(j) = \sum_{k=1}^{\infty} \lambda_k (T a_k)(j), \,\,\,\, \text{for all} \,\, j \in \mathbb{Z}^n.
\end{equation}
By our hypotheses, $M_k = \lambda_k (Ta_k)$ is a $(q, q_0, r, 0)$-molecule for every $k$ and
\begin{equation} \label{Nlambda}
\mathcal{N}(M_k) \leq C_0 |\lambda_k|.
\end{equation}
Now, by (\ref{Nlambda}), (\ref{norm Hp}) and Theorem \ref{mol decomp}, there exists $h \in H^p(\mathbb{Z}^n)$ such that 
$h(j) = \sum_{k=1}^{\infty} M_k(j) = \sum_{k=1}^{\infty} \lambda_k (T a_k)(j)$ for all $j \in \mathbb{Z}^n$. So, (\ref{Tb}) gives $h = Tb$. Then, (\ref{h}), (\ref{Nlambda}) and (\ref{norm Hp}) allow us to get (\ref{acot HpHq}).
\end{proof}

\section{Discrete Riesz potential}

Let $0 < \alpha < n$, $0 < p < \frac{n}{\alpha}$ and $\frac{1}{q} = \frac{1}{p} - \frac{\alpha}{n}$. In this section we obtain the
$H^p(\mathbb{Z}^n) \to H^q(\mathbb{Z}^n)$ boundedness of discrete Riesz potential $I_{\alpha}$, which is defined by
\begin{equation} \label{discrete Riesz pot}
(I_{\alpha}b)(j) = \sum_{i \in \mathbb{Z}^n \setminus \{ j \}} \frac{b(i)}{|i-j |^{n - \alpha}}, \,\,\,\,\,\, j \in \mathbb{Z}^n.
\end{equation}

We first prove that the operator $I_{\alpha}$ maps atoms into molecules.

\begin{proposition} \label{Ta mol}
Let $n \geq 2$, $0 < \alpha < \frac{n}{n-1}$, $\frac{n-1}{n} < p \leq \frac{n}{n+\alpha}$, $\frac{1}{q} = \frac{1}{p} - \frac{\alpha}{n}$, 
$\frac{n}{n-\alpha} < q_0 < \infty$, $r > \frac{1}{q} - \frac{1}{q_0}$, $L = \lfloor n(r + \frac{1}{q_0}) + \alpha \rfloor +1$ and let 
$I_{\alpha}$ be the discrete Riesz potential given by (\ref{discrete Riesz pot}). Then $I_{\alpha}a(\cdot)$ is a $(q, q_0, r, 0)$-molecule for every $(p, \infty, L)$-atom $a(\cdot)$. Moreover, there exists an universal positive constat $C_0$ such that 
$\mathcal{N}(I_{\alpha}a) \leq C_0$ for all $(p, \infty, L)$-atom $a(\cdot)$.
\end{proposition}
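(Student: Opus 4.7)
The plan is to fix an arbitrary $(p,\infty,L)$-atom $a$ supported on a cube $Q\subset\mathbb{Z}^n$ of side length $\ell$ centered at $c_Q$, and to show that $I_\alpha a$ satisfies the molecular conditions (m1)--(m2) centered at $m_0=c_Q$ with $\mathcal{N}(I_\alpha a)\leq C_0$ independent of $a$. The engine of the proof is a pair of pointwise bounds for $I_\alpha a$: a ``local'' estimate on $\{j:|j-c_Q|_\infty<A\ell\}$ coming from an off-the-shelf $\ell^s\to\ell^{q_0}$ boundedness, and a ``remote'' estimate on $\{j:|j-c_Q|_\infty\geq A\ell\}$ coming from the vanishing moments of $a$ (here $A$ is a fixed large constant).

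For the local region I would invoke the discrete Hardy--Littlewood--Sobolev inequality $I_\alpha\colon\ell^{s}(\mathbb{Z}^n)\to\ell^{q_0}(\mathbb{Z}^n)$ with $1/s=1/q_0+\alpha/n$, available because the hypothesis $q_0>n/(n-\alpha)$ places $s$ in $(1,n/\alpha)$. Together with $\|a\|_{\ell^\infty}\leq\ell^{-n/p}$ and $\supp a\subset Q$, H\"older yields $\|I_\alpha a\|_{\ell^{q_0}}\leq C\|a\|_{\ell^s}\leq C\ell^{n/q_0-n/q}$. For the remote region I would Taylor-expand $y\mapsto|y-j|^{\alpha-n}$ around $y=c_Q$ to order $L$ and use $\sum_i a(i)(i-c_Q)^\beta=0$ for $[\beta]\leq L$ to kill the polynomial part; the standard estimate on the $(L{+}1)$-st derivative of $|\cdot|^{\alpha-n}$ together with $\sum_i|a(i)||i-c_Q|^{L+1}\leq C\ell^{n(1-1/p)+L+1}$ gives
\[
|I_\alpha a(j)|\leq C\,\ell^{n(1-1/p)+L+1}\,|j-c_Q|^{\alpha-n-L-1},\qquad|j-c_Q|_\infty\geq A\ell.
\]

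With these two estimates I would bound $\||\cdot-c_Q|^{nr}I_\alpha a\|_{\ell^{q_0}}$ by splitting into local and remote pieces. The local piece absorbs the weight $(A\ell)^{nr}$ and contributes $C\ell^{nr+n/q_0-n/q}$. The remote piece is controlled by Lemma~\ref{series0} applied to $\sum_{|k|_\infty\geq A\ell}|k|^{q_0(nr+\alpha-n-L-1)}$, which converges precisely because $L+1>n(r+1/q_0)+\alpha-n$ --- comfortably implied by $L=\lfloor n(r+1/q_0)+\alpha\rfloor+1$ --- and a direct computation shows this piece also scales as $C\ell^{nr+n/q_0-n/q}$. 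Consequently
\[
\mathcal{N}(I_\alpha a)\leq C\,\ell^{(1-\theta)(n/q_0-n/q)+\theta(nr+n/q_0-n/q)}=C\,\ell^{n/q_0-n/q+\theta nr},
\]
and since $\theta nr=n(1/q-1/q_0)$ the exponent of $\ell$ is exactly zero, which yields $\mathcal{N}(I_\alpha a)\leq C_0$.

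It remains to verify the cancellation $\sum_j I_\alpha a(j)=0$. The pointwise bounds above force $I_\alpha a\in\ell^1(\mathbb{Z}^n)$, and I would establish the cancellation through an Abel-type regularization using the $\Phi$ of Lemma~\ref{estim local}: define $K^R(k):=|k|^{\alpha-n}\widehat{\Phi}(k/R)$ for $k\neq{\bf 0}$ and $K^R({\bf 0}):=0$. Because $\widehat{\Phi}$ has compact support, $K^R$ is finitely supported, so Fubini yields $\sum_j(K^R\ast a)(j)=\bigl(\sum_k K^R(k)\bigr)\bigl(\sum_i a(i)\bigr)=0$ via the $\beta={\bf 0}$ moment of $a$. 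The difference $(K^R\ast a-I_\alpha a)(j)=-\sum_i a(i)|i-j|^{\alpha-n}\bigl[1-\widehat{\Phi}((i-j)/R)\bigr]$ vanishes on $\{j:|j-c_Q|_\infty\leq R-\sqrt{n}\,\ell/2\}$, and on the complement the Taylor argument applied to $K(\cdot)\bigl[\widehat{\Phi}(\cdot/R)-1\bigr]$, with $R$-uniform derivative bounds (Leibniz plus the Schwartz estimates for $\widehat{\Phi}$), yields the same decay $C\ell^{n(1-1/p)+L+1}|j-c_Q|^{\alpha-n-L-1}$; summing gives $\|K^R\ast a-I_\alpha a\|_{\ell^1}\leq C\ell^{n(1-1/p)+L+1}R^{\alpha-L-1}\to 0$, and passing to the limit delivers $\sum_j I_\alpha a(j)=0$. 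The main obstacle I anticipate is the careful exponent bookkeeping that forces the remote weighted sum to produce exactly the scaling $\ell^{nr+n/q_0-n/q}$ matching the local piece, together with the $R$-uniform Taylor estimate needed for the dominated-convergence step in the cancellation.
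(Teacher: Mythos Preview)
Your size-condition argument is essentially identical to the paper's: the same local/remote split at a multiple of the cube's side, the same $\ell^{p_0}\to\ell^{q_0}$ boundedness of $I_\alpha$ (your $s$ is the paper's $p_0$) on the local piece, the same Taylor--moment pointwise decay on the remote piece fed into Lemma~\ref{series0}, and the same exponent cancellation giving $\mathcal{N}(I_\alpha a)\leq C_0$.

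For the moment condition $\sum_j I_\alpha a(j)=0$ your route differs from the paper's. The paper works on the Fourier side: it writes $\widehat{I_{\alpha,R}a}(x)=\mu_{\alpha,R}(x)\,\widehat{a}(x)$, invokes Lemma~\ref{estim local} to bound $\lim_R|\mu_{\alpha,R}(x)|\leq C|x|^{-\alpha}$ and Lemma~\ref{FT atom} to bound $|\widehat a(x)|\leq C|x|^{L+1}$, and lets $x\to{\bf 0}$. You instead stay on the physical side, using finite support of $K^R$ plus the $\beta={\bf 0}$ moment of $a$ to get $\sum_j(K^R\ast a)(j)=0$, and then pass to the limit via an $\ell^1$ bound on $K^R\ast a-I_\alpha a$ obtained by recycling the Taylor estimate (with $R$-uniform constants) on the region $|j-c_Q|_\infty\gtrsim R$. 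Both arguments are correct; yours avoids Lemmas~\ref{estim local} and~\ref{FT atom} at the cost of checking the $R$-uniform Leibniz bound on derivatives of $|k|^{\alpha-n}\bigl[\widehat\Phi(k/R)-1\bigr]$, while the paper's is shorter once those two lemmas are in hand. A minor cosmetic point: the radius on which the difference vanishes should be roughly $R/\sqrt{n}$ rather than $R-\sqrt{n}\,\ell/2$, but this does not affect the argument.
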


\begin{proof}
Given an $(p, \infty, L)$-atom $a(\cdot)$ centered at $m_0$, we shall prove that $I_{\alpha}a(\cdot)$ is a $(q, q_0, r, 0)$-molecule centered at $m_0$. For them, let 
$Q = \{ j \in \mathbb{Z}^n : |j-m_0|_{\infty} \leq N \}$ be the discrete cube on which $a(\cdot)$ is supported. Now, we put
$Q^{*} = \left\{ j \in \mathbb{Z}^n : |j-m_0|_{\infty} \leq 4 \lfloor \sqrt{n} \rfloor N \right\}$ and write 
$U_1 = \left( \sum_{j \in Q^{*}} \left|(I_{\alpha} a)(j) \, |j - m_0|^{nr}\right|^{q_0} \right)^{1/q_0}$ and
$U_2 = \left( \sum_{j \notin Q^{*}} \left|(I_{\alpha} a)(j) \, |j - m_0|^{nr}\right|^{q_0} \right)^{1/q_0}$, where
$\frac{n}{n-\alpha} < q_0 < \infty$ is fixed. Then, we define $\frac{1}{p_0} := \frac{1}{q_0} + \frac{\alpha}{n}$. By 
Remark \ref{infinity atom}, we have that $a(\cdot)$ is an $(p, p_0, L)$-atom.
Now, \cite[Proposition (a)]{Wainger} gives 
\[
U_1 \leq C \| I_{\alpha}a \|_{\ell^{q_0}} N^{nr} \leq C \|a \|_{\ell^{p_0}} N^{nr}.
\] 
Following the ideas to obtain $(14)$ in \cite[Proof of Theorem 3.3]{Rocha2}, we obtain
\[
U_2 \leq C N^{L + 1} \sum_{k \in Q} |a(k)|
\left(\sum_{j \notin Q^{*}} |j-m_0|^{(nr + \alpha - n - L - 1) q_0}\right)^{1/q_0}. 
\]
From H\"older's inequality and Lemma \ref{series0} with $\epsilon = -n +(n + L + 1 - \alpha - nr)q_0 > 0$, we get 
\[
U_2 \leq C \|a \|_{\ell^{p_0}} N^{L + 1 + n - n/p_0} N^{nr + \alpha - n - L - 1 + n/q_0}
= C \|a \|_{\ell^{p_0}} N^{nr}.
\]
Thus, we have
\[
\| I_{\alpha}a(\cdot) | \cdot - m_0|^{nr} \|_{\ell^{q_0}} \leq C (U_1 + U_2) \leq C \|a \|_{\ell^{p_0}} N^{nr}.
\]
Then, for $\theta = (1/q - 1/q_0)/r = (1/p - 1/p_0)/r$, it follows that
\[
\mathcal{N}(I_{\alpha}a) \leq C \|a \|_{\ell^{p_0}}^{1-\theta} \|a \|_{\ell^{p_0}}^{\theta} N^{nr\theta} 
= C \|a \|_{\ell^{p_0}} N^{n(1/p - 1/p_0)} \leq C_0,
\]
where $C_0$ does not depend on the atom $a(\cdot)$. This gives the size condition (m1).

For $R > 0$ and $\Phi$ as in Lemma \ref{estim local}, we put
\[
(I_{\alpha, R} \, a)(j) = \sum_{k \neq {\bf 0}} |k|^{\alpha-n} \, \widehat{\Phi}(k/R) \, a(j-k),
\]
and
\[
\widehat{(I_{\alpha, R} \, a)}(x) = \sum_{j \in \mathbb{Z}^n} (I_{\alpha, R} \, a)(j) \, e^{2\pi i (j \cdot x)}.
\]
A computation gives
\[
\widehat{(I_{\alpha, R} \, a)}(x) = \mu_{\alpha, R}(x) \, \widehat{a}(x).
\]
By Lemma \ref{estim local} and Lemma \ref{FT atom}, we have
\begin{equation} \label{moment for Ia}
|\widehat{(I_{\alpha} \, a)}(x)| = \lim_{R \to \infty} |\widehat{(I_{\alpha, R} \, a)}(x)| = \lim_{R \to \infty} |\mu_{\alpha, R}(x)| \, |\widehat{a}(x)|
\end{equation}
\[
\leq C |x|^{-\alpha} |x|^{L + 1} \sum_{j \in \mathbb{Z}^n} |a(j)| |j|^{L + 1} 
e^{2\pi \sqrt{n} |j|},
\]
for all $x \in [-1/2, 1/2)^n \setminus \{ {\bf 0 }\}$. Since $L + 1 - \alpha > 0$, after letting $x \to {\bf 0}$ in 
(\ref{moment for Ia}), we obtain
\[
\left| \sum_{j \in \mathbb{Z}^n} (I_{\alpha}a)(j) \right|=|\widehat{(I_{\alpha} \, a)}({\bf 0})| = 0,
\]
which is the moment condition (m2) for the range $\frac{n-1}{n} < p \leq 1$.
\end{proof}

\begin{theorem}
Let $0 < \alpha < n$ and let $I_{\alpha}$ be the discrete Riesz potential given by (\ref{discrete Riesz pot}). If $\frac{n-1}{n} < p < \frac{n}{\alpha}$ and $\frac{1}{q} = \frac{1}{p} - \frac{\alpha}{n}$, then there exists a positive constant $C$ such that
\begin{equation} \label{main result}
\| I_{\alpha} b \|_{H^q(\mathbb{Z}^n)} \leq C \| b \|_{H^p(\mathbb{Z}^n)}
\end{equation}
for $b \in H^p(\mathbb{Z}^n)$. 
\end{theorem}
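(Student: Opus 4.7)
The plan is to split the interval $(\tfrac{n-1}{n},\tfrac{n}{\alpha})$ at the value $p=\tfrac{n}{n+\alpha}$, which is precisely the exponent making the conjugate index $q=1$. For $p\leq \tfrac{n}{n+\alpha}$ one has $q\leq 1$ and the target $H^q(\mathbb{Z}^n)$ is a genuine Hardy space, so a molecular argument is needed; for $p>\tfrac{n}{n+\alpha}$ one has $q>1$ and $H^q(\mathbb{Z}^n)$ collapses to $\ell^q(\mathbb{Z}^n)$, so we are back in a setting already treated by the author.

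For the subrange $\tfrac{n-1}{n}<p\leq \tfrac{n}{n+\alpha}$ (non-empty precisely because this regime implicitly forces $\alpha<\tfrac{n}{n-1}$), I would apply Corollary \ref{Criterion} with $T=I_{\alpha}$. Concretely, fix any $q_0\in(\tfrac{n}{n-\alpha},\infty)$ and set $\tfrac{1}{p_0}:=\tfrac{1}{q_0}+\tfrac{\alpha}{n}$, so that $1<p_0\leq q_0<\infty$. The discrete Hardy--Littlewood--Sobolev inequality \cite[Proposition (a)]{Wainger} then supplies the required boundedness $I_{\alpha}:\ell^{p_0}(\mathbb{Z}^n)\to\ell^{q_0}(\mathbb{Z}^n)$. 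Choose $r>\tfrac{1}{q}-\tfrac{1}{q_0}$ and $L=\lfloor n(r+\tfrac{1}{q_0})+\alpha\rfloor +1$; with these parameters Proposition \ref{Ta mol} says that $I_{\alpha}a$ is a $(q,q_0,r,0)$-molecule with $\mathcal{N}(I_{\alpha}a)\leq C_0$ uniformly over all $(p,\infty,L)$-atoms $a$. Every hypothesis of Corollary \ref{Criterion} is then in force, and that corollary delivers exactly \eqref{main result} in this subrange.

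For the subrange $\tfrac{n}{n+\alpha}<p<\tfrac{n}{\alpha}$ one has $q>1$, so by the definition of $H^q(\mathbb{Z}^n)$ recalled in the Introduction (where \cite[Proposition 12]{Auscher} is invoked), $H^q(\mathbb{Z}^n)=\ell^q(\mathbb{Z}^n)$ with equivalent norms. The inequality \eqref{main result} then reduces to the $H^p(\mathbb{Z}^n)\to\ell^q(\mathbb{Z}^n)$ boundedness of $I_{\alpha}$, which the author already established in \cite{Rocha2} for the full range $0<p<\tfrac{n}{\alpha}$.

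The real substance is concentrated in the first subrange, and the only potential pitfall is bookkeeping: one must verify that the parameters $p_0,q_0,r,L$ demanded by Proposition \ref{Ta mol} and those demanded by Corollary \ref{Criterion} can be chosen simultaneously. This is automatic because $q_0$ may be taken arbitrarily large (keeping $p_0>1$ via $\tfrac{1}{q_0}+\tfrac{\alpha}{n}<1$) and $r$ may then be chosen above $\tfrac{1}{q}-\tfrac{1}{q_0}$, so no quantitative obstruction arises. Beyond this, the proof is a direct assembly of the preceding results.
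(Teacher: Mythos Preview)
Your argument is correct and mirrors the paper's own proof almost verbatim: the same split at $p=\tfrac{n}{n+\alpha}$, the same invocation of \cite{Rocha2} when $q>1$, and the same combination of Proposition~\ref{Ta mol} with Corollary~\ref{Criterion} (with identical choices of $q_0$, $p_0$, $r$, $L$) when $q\leq 1$. The only omission is the case $n=1$: both Proposition~\ref{Ta mol} and Corollary~\ref{Criterion} are stated for $n\geq 2$, so you should dispose of $n=1$ separately by citing \cite[Theorem~4]{Kanjin}, as the paper does.
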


\begin{proof} The case $n=1$ was proved in \cite[Theorem 4]{Kanjin}. From now on, we consider $n \geq 2$.
For the case $1 < p < \frac{n}{\alpha}$ and $\frac{1}{q} = \frac{1}{p} - \frac{\alpha}{n}$ (i.e.: $1 < p < q < \infty$), we have 
that $H^p(\mathbb{Z}^n) = \ell^p(\mathbb{Z}^n)$ and $H^q(\mathbb{Z}^n) = \ell^q(\mathbb{Z}^n)$. So, the $\ell^p(\mathbb{Z}^n) \to \ell^q(\mathbb{Z}^n)$ boundedness of $I_{\alpha}$ follows from 
\cite[Proposition (a)]{Wainger} or \cite[Theorem 3.1]{Rocha2}.

For $\frac{n}{n + \alpha} < p \leq 1$ and $\frac{1}{q} = \frac{1}{p} - \frac{\alpha}{n}$ (i.e.: $1 < q < \infty$), we have that 
$H^p(\mathbb{Z}^n) \subsetneqq \ell^p(\mathbb{Z}^n)$ and $H^q(\mathbb{Z}^n) = \ell^q(\mathbb{Z}^n)$. In this case, 
\cite[Theorem 3.3]{Rocha2} gives the $H^p(\mathbb{Z}^n) \to \ell^q(\mathbb{Z}^n)$ boundedness of $I_{\alpha}$.

If $ \frac{n}{n-1} \leq \alpha <n$, we have that $\frac{n}{n + \alpha} \leq \frac{n-1}{n}$. This is contemplated in the previous case. Thus, 
for $0 < \alpha < \frac{n}{n-1}$, $\frac{n-1}{n} < p \leq \frac{n}{n + \alpha}$ and $\frac{1}{q} = \frac{1}{p} - \frac{\alpha}{n}$ 
(i.e.: $\frac{n-1}{n} < p < q \leq 1$), we have that $H^p(\mathbb{Z}^n) \subsetneqq \ell^p(\mathbb{Z}^n)$ and 
$H^q(\mathbb{Z}^n) \subsetneqq \ell^q(\mathbb{Z}^n)$. Here is where we apply the molecular reconstruction to establish the 
$H^p(\mathbb{Z}^n) \to H^q(\mathbb{Z}^n)$ boundedness of $I_{\alpha}$. For them, we take $\frac{n}{n-\alpha} < q_0 < \infty$, 
$r > \frac{1}{q} - \frac{1}{q_0}$ and put $\frac{1}{p_0} := \frac{1}{q_0} + \frac{\alpha}{n}$, since $I_{\alpha}$ is a bounded linear operator $\ell^{p_0}(\mathbb{Z}^n) \to \ell^{q_0}(\mathbb{Z}^n)$, Proposition \ref{Ta mol} and Corollary \ref{Criterion} 
(with $L= \lfloor n(r + \frac{1}{q_0}) + \alpha \rfloor +1$) allow us to obtain (\ref{main result}). This finishes the proof.
\end{proof}

\bigskip
\address{
Departamento de Matem\'atica \\
Universidad Nacional del Sur (UNS) \\
Bah\'{\i}a Blanca, Argentina}
{pablo.rocha@uns.edu.ar}

\end{document}